\title[Probability, Minimax Approximation]{\bf
Probability, Minimax Approximation and Nash-Equilibrium. Estimating the Parameter of a Biased Coin}
\newcommand{\be}{\begin{equation}}
\newcommand{\ee}{\end{equation}}
\newcommand{\supp}{\mathop{\rm supp}}
\newtheorem{theorem}{Theorem}
\newtheorem{lemma}[theorem]{Lemma}
\newtheorem{corollary}[theorem]{Corollary}
\theoremstyle{remark}
\newtheorem{remark}[theorem]{Remark}
\newtheorem{example}[theorem]{Example}
\theoremstyle{definition}
\newtheorem{definition}[theorem]{Definition}
\begin{document}

\centerline{\bf  \Large }
\author[D. Benko]{D. Benko}
\address{Department of Mathematics and Statistics, University of South Alabama, Mobile, AL 36688, USA}
\email{dbenko@southalabama.edu}

\author[D. Coroian]{D. Coroian }
\address{Department of Mathematical Sciences, Indiana University-Purdue University, Fort Wayne, IN 46805, USA }
\email{coroiand@ipfw.edu}

\author[P. Dragnev]{P. D. Dragnev $^{\dagger}$}
\address{Department of Mathematical Sciences, Indiana University-Purdue University, Fort Wayne, IN 46805, USA }
\email{dragnevp@ipfw.edu}
\thanks{\noindent $^{\dagger}$ The research of this author was supported, in part, by a Simons Foundation grant no. 282207.}

\author[R. Orive]{R. Orive $^{*}$}
\address{Departmento de An\'{a}lisis Matem\'{a}tico, Universidad de La Laguna, 38200, The Canary Islands, Spain }
\email{rorive@ull.es}
\thanks{\noindent $^{*}$ The research of this author was supported, in part, by Ministerio de Ciencia e Innovaci\'{o}n under grant MTM2015-71352-P,
and was conducted while visiting IPFW as a Scholar-in-Residence.}

\vskip 1 cm

\begin{abstract} This paper deals with the application of Approximation Theory type techniques to study a classical problem in Probability: estimating the parameter of a biased coin. For this purpose, a Minimax Estimation problem is considered and the characterization of the optimal estimator is shown, together with the weak asymptotics of such optimal choices as the number of coin tosses approaches infinity; in addition, a number of numerical examples and graphs are displayed. At the same time, the problem is also discussed from the Game Theory viewpoint, as a non-cooperative, two--player game, and a Nash-equilibrium is established. The particular case of $n=2$ tosses is completely solved.

\end{abstract}

\keywords{minimax optimization, biased probability, polynomial interpolation and approximation, Nash-equilibrium, }
\subjclass[2010]{65C50, 41A10, 91A05, 41A05}

\maketitle

{\em `` Dedicated to Prof. Walter Gautschi on the occasion of his 90-th birthday.''}

\vskip 1 cm

\setcounter{equation}{0}
\setcounter{theorem}{0}
\section{Introduction}
\label{intro}

The following problem is well-known in the area of Probability. A biased coin is given, but the probability of heads is unknown. We flip it $n$ times and get $k$ heads. The problem is to estimate the probability of heads.
The most typical approach to solve this problem is the Maximum Likelihood method (see e.g. \cite{LC} or \cite{P}). Let $p=P(heads)$. Then

$$P(k \ {\rm heads\ out\ of}\ n\ {\rm tosses})= {n \choose k } p^k(1-p)^{n-k}.$$

Since we have absolutely no information about $p$, we choose an estimator $\widehat{p}\in [0,1]$ for which this expression is maximal, that is, $\widehat{p} = k/n$.
This approach has several shortcomings.
1.) For small $n$ we get unrealistic estimations. For example, if $n=1$ and we get a head, the method gives the estimation $p=1$, and if we get a tail, the method gives $p=0$.
2.) the method yields the most likely value of $p$, but does not take into account the error in the estimation. This can be seen in the following example: suppose we flip the coin $n=4$ times and get $k=2$ heads; of course, $\widehat{p} = .5$. However, while in the literature a coin is often considered ``reasonably fair'' when $.45 \leq p \leq .55$, in this case it is easy to check that $P\left((|p-.5|>.05)\ \vert \ (n=4, k=2)\right) = .8137...\,$, assuming $p$ is chosen following a uniform distribution, and, thus, the probability of dealing with a biased coin is high despite the Maximum Likelihood estimator.

Another approach for estimating the probability $p$ of a biased coin deals with Bayesian or Minimax Estimation. While in the former the goal is minimizing the average risk, in the latter the aim is to minimize the maximum risk; nevertheless, there are other well-known methods, such as the so-called Uniform Minimum Variance Unbiased (UMVU) estimator (for more information, see \cite{LC}). These methods require the use of a \textit{loss function}, which measures the difference between the parameter and its estimator. In the current paper, the loss function $|p-a_i|$ will be considered, where $a_i$ is the estimation of $p$ if $i$ heads are obtained after $n$ tosses, for $i=0,\ldots,n\,$. Then, the expected value of this loss function, commonly called \textit{risk} or \textit{penalty function}, is given by
\begin{equation}
\label{penalty}
D(a_0,...,a_n;p):=\sum_{k=0}^n {n \choose k} p^k(1-p)^{n-k} |p-a_k|
\end{equation}
and our goal will be to choose $a_i$'s that minimize the sup-norm of $D$. Figure \ref{FiveToss} below, corresponding to the case of $n=5$ tosses, illustrates the motivation for our analysis. It shows the penalty functions for the Maximum Likelihood choice (i.e. $a_k = k/n\,,\,k=0,\ldots,n$) and for the choice of the $a_i$'s we will prove to be optimal (see Section 2). Indeed, the sup-norm of the penalty function $D$ for the optimal choice is clearly smaller than the one corresponding to the Maximum Likelihood approach. Moreover, Figure \ref{FiveToss} shows that the optimal choice satisfies what we call hereafter the ``equimax'' property. This is very similar to the characterization of the set of interpolation nodes to reach the optimal Lebesgue function, as conjectured by Bernstein and Erd\H{o}s and proved forty years later by Kilgore \cite{K} and de Boor-Pinkus \cite{BP}. This similarity served as an important motivation to apply Approximation Theory type techniques for investigating this problem, and will be discussed in more detail in Section 2.

\begin{figure}[h]
\centering\includegraphics*[width=2.5in]{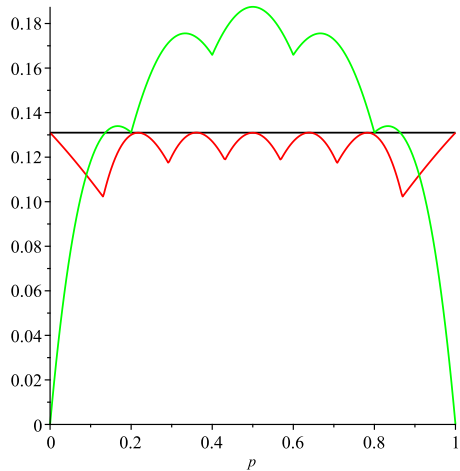}
\caption{Graph of $D(a_0,...,a_n;p)$ in the case of $n=5$ tosses: Maximum Likelihood (green), optimal choice (red), and the maximum value of the penalty function for the optimal choice (black) }
\label{FiveToss}
\end{figure}

It is worth pointing out that in the Statistics literature one often prefers the use of squares instead of absolute values in \eqref{penalty}, that is, the minimization of
\begin{equation}\label{quadraticloss}
\widehat{D}(a_0,...,a_n;p):=\sum_{k=0}^n {n \choose k} p^k(1-p)^{n-k} (p-a_k)^2\,,\,p\in [0,1]\,,
\end{equation}
is considered, because of its analytical tractability and easier computations. Indeed, for the penalty function \eqref{quadraticloss} the optimal (minimax) strategy $\{a_0,\ldots,a_n\}$ is explicitly computed (see \cite{LC}):
\begin{equation}\label{quadraticoptimal}
a_k = \,\frac{1}{2}\,+\,\frac{\sqrt{n}}{1+\,\sqrt{n}}\,\left(\frac{k}{n}\,-\frac{1}{2}\right)\,,\,k=0,\ldots,n\,.
\end{equation}
Of course, this is the optimal strategy when measuring the loss using the least squares norm, but not in our ``uniform'' setting.
In Figure 2 below, we augment Figure 1 with the plot of the penalty function $D$ for the strategy \eqref{quadraticoptimal}.

\begin{figure}[h]
\centering\includegraphics*[width=3.0in]{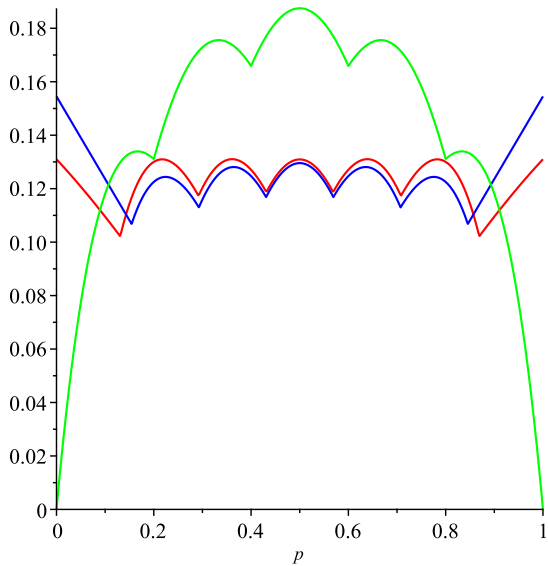}
\caption{Graph of $D(a_0,...,a_n;p)$ in the case of $n=5$ tosses: Maximum Likelihood (green), optimal choice (red), and the quadratic optimal choice (blue) }
\label{comparison}
\end{figure}

As it can be seen in Figure 2, the behavior of the Squared Error Minimax Estimator (hereafter, SEME) is similar, or even a bit better, towards the center of the interval, but clearly worse when we are close to the endpoints of the interval. Thus, for $n=5$, the sup-norm of the penalty function $D$ for the SEME is $0.1545$, while for our Absolute Error Minimax Estimator (AEME) is $0.131$. More generally, as mentioned above, along with the Minimax Estimators, the so-called Bayes Estimators are also often employed (see \cite[Ch. 4]{LC}). In this setting, given a loss function $R(\theta, \delta)$, some ``prior'' distribution $\Lambda$ for the parameter $\theta$ to be determined (in our case, $\theta = p$) is selected, and the estimator $\delta_{\Lambda}$ is chosen in order to minimize the weighted average risk
\begin{equation}\label{Bayesrisk}
r(\lambda, \delta) = \int \, R(\theta, \delta)\,d \Lambda (\theta),
\end{equation}
\noindent also called Bayes risk. As established in \cite[Theorem 5.1.4]{LC} and some corollaries, in certain situations a Bayes estimator is also a Minimax estimator. This connection will be used below (Section 4) when discussing the interpretation of our method within the framework of Game Theory. Thus, roughly speaking, we can say that in this paper we are dealing with a ``multi-faceted'' topic, which we investigate from the points of view of Point Estimation Theory, Approximation Theory, and Game Theory.

On the other hand, the problem of estimating the probability of a coin (more generally, the parameter of a Bernoulli distribution) from a few tosses has been often considered as a toy-model for randomization processes and for checking the effectiveness of different methods in statistical inference. In this sense, it is noteworthy that some recent papers have dealt with the problem of simulating a coin with a certain prescribed probability $f(p)$, by using a coin whose probability is actually $p$ (see \cite{Holtz} and \cite{Nacu}). Actually, this idea comes from a seminal paper by von Neumann \cite{Neumann}. In \cite{Holtz} and \cite{Nacu} the authors also show, just as we do in the current paper, the utility of techniques from Approximation Theory to solving problems from Probability.

The paper is structured as follows. In Section 2, our minimax estimation is thoroughly studied and the optimal choice is established by Theorem \ref{thm:equimax}, which represents the main result of this paper. Some computational results are included. The asymptotic distribution of the set of nodes corresponding to such optimal strategies, when the number of tosses approaches infinity, is established in Section 3. In Section 4 we discuss the problem from the Game Theory standpoint, as a non-cooperative two-player game, which is described in detail. Also, the existence of a Nash-equilibrium is established. Furthermore, in Section 5, this Nash-equilibrium is explicitly solved for the case of $n=2$ tosses.
In both Sections 4 and 5, the solution of this Nash-equilibrium problem is related to the connection between Minimax and Bayes estimators.
Finally, the last section contains some further remarks and conclusions.

\setcounter{equation}{0}
\setcounter{theorem}{0}
\section{The minimax estimation}
\label{Main}

\vskip 3mm

As it was discussed above, the optimal strategy we consider is to choose $a_0,\ldots,a_n \in [0,1]$ in order to minimize the sup-norm of the penalty function $D(a_0,\ldots,a_n;p)$. Thus, our minimax problem has a striking resemblance with the well-known problem of the optimization of the Lebesgue function in polynomial interpolation. Indeed, let us consider the polynomial interpolation of a function $f$ over a set of nodes $x_0,\ldots,x_n \in [0,1]$. By the classical Lagrange formula, we know the expression of such interpolating polynomial is: $$L_n (f;x_0,\ldots,x_n;x) = \sum_{k=0}^n\,f(x_k)\,l_k(x_0,\ldots,x_n;x)\,,$$ where $l_k(x_0,\ldots,x_n;x),\,k=0,\ldots,n,\,$ are the well-known Lagrange interpolation polynomials, and they form a basis for $\mathcal{P}_n$, the space of polynomials of degree less than or equal to $n$. Since the norm of the projection operator from $C[0,1]$, the space of all continuous functions on $[0,1]$, onto $\mathcal{P}_n$ is given by the sup-norm of the Lebesgue function

\begin{equation}\label{Lebesgue}
\Lambda(x_0,\ldots,x_n;x) = \sum_{k=0}^n\,|l_k(x_0,\ldots,x_n;x)|\,,
\end{equation}

\noindent the problem of finding optimal choices of nodes $x_0,\ldots,x_n \in [0,1]$ minimizing the sup-norm of \eqref{Lebesgue} arises in a natural way. It is well known that if the endpoints of the interval belong to the set of nodes, then the solution is unique. As for the characterization of the solution, the famous Bernstein-Erd\H{o}s conjecture asserted that for an optimal choice, the corresponding Lebesgue function \eqref{Lebesgue} must exhibit the following ``equimax'' property: If the absolute maximum of $\Lambda(x_0,\ldots,x_n;x)$ on each subinterval $[x_{i-1},x_i]$ is denoted by $\lambda_i = \lambda_i(x_0,\ldots,x_n)\,,i=1,\ldots,n$\, then we have:
\begin{equation*}\label{equimaxLeb}
\lambda_1 = \ldots = \lambda_n.
\end{equation*}
This conjecture was finally proved by Kilgore \cite{K} (see also \cite{BP}).

Now, the following result shows that the above mentioned resemblance between both minimax problems can be extended to the characterization of the optimal solutions. Let $f(p):=D(a_0,\ldots,a_n;p)$ be an optimal penalty function in the sense of minimizing the sup-norm of \eqref{penalty}. Then, the following result, which gathers some necessary conditions to be satisfied for an optimal choice $\{a_0,\ldots,a_n\}\,,$ will be useful. It will be stated without assuming that the points are ``well-ordered'', i.e. that $a_0 <a_1 <\ldots <a_n$. Although this fact may seem obvious, it does require a proof (see Theorem \ref{thm:equimax} below).

\begin{lemma}\label{lem:maxima}
Let
\begin{equation}M(f):=\{ x\in [0,1] \ : \ f(x)=\| f \|_\infty\}\end{equation}
be the set of absolute maxima of an optimal penalty function $f$. Then,
\begin{itemize}
\item [(i)] $M(f) \cap \{a_0,...,a_n\}\,=\,\emptyset$.
\item [(ii)] $M(f) \cap [0,\min \{a_i\})\,\neq\,\emptyset\,,\;M(f) \cap (\max \{a_i\},1]\,\neq\,\emptyset\,.$
\item [(iii)] $a_0 \leq 1/2 \leq a_n$ and $M(f) \cap (a_0, a_n)\,\neq\,\emptyset\,.$
\end{itemize}
\end{lemma}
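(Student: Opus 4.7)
The plan is to use local perturbation arguments applied to $f(p) = \sum_{k=0}^n w_k(p)\,|p-a_k|$, where $w_k(p) = \binom{n}{k} p^k(1-p)^{n-k}$, together with the optimality of $(a_0,\ldots,a_n)$. Two preliminary observations will anchor the argument. First, the constant strategy $a_k \equiv 1/2$ yields $f(p) = |p-1/2|$, so any optimal tuple satisfies $\|f\|_\infty \le 1/2$; comparing instead with the quadratic-optimal strategy \eqref{quadraticoptimal} and using $D \le \sqrt{\widehat{D}}$ (Cauchy--Schwarz), one gets the strict bound $\|f_{\text{opt}}\|_\infty \le 1/(2(1+\sqrt{n})) < 1/2$. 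Second, $f(0) = a_0$ and $f(1) = 1-a_n$.

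For part (i), the key observation is that $f$ has a corner at each $a_k$. Away from $\{a_0,\ldots,a_n\}$ the function $f$ is $C^1$, while at an interior node $a_k \in (0,1)$ the $k$-th summand contributes a positive jump $+2 w_k(a_k)$ to the derivative (or the positive sum $2\sum_{j:a_j=a_k} w_j(a_k)$ if several nodes coincide), all other summands being $C^1$ in a neighborhood. Hence $f'_+(a_k) - f'_-(a_k) > 0$, which is incompatible with $a_k$ being a local, and therefore global, maximum. The endpoint cases $a_k \in \{0,1\}$ are more delicate: the easy subcases are $a_0 = 0$ (then $f(0)=0$, so $0 \notin M(f)$ since $\|f\|_\infty > 0$) and $a_n = 1$ (symmetric), while $a_k \in \{0,1\}$ with $0 < k < n$ requires either a compound perturbation (perturb $a_k$ inward together with $a_0$ downward to simultaneously decrease $f$ at $0$ and on $M(f)$) or deferring to (ii), whose conclusion forces all $a_k \in (0,1)$.

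For part (ii), argue by contradiction. Setting $a_\ast = \min\{a_i\}$ and picking $k^\ast$ with $a_{k^\ast} = a_\ast$, suppose $M(f) \cap [0,a_\ast) = \emptyset$. By the interior corner argument from (i), $a_\ast \notin M(f)$, so the closed set $M(f)$ is separated from $[0,a_\ast]$ by a positive gap and $f$ sits strictly below $\|f\|_\infty$ on $[0,a_\ast]$ by a positive margin. Perturbing $a_{k^\ast} \mapsto a_\ast + \epsilon$ decreases the $k^\ast$-summand by $w_{k^\ast}(p)\epsilon$ on $(a_\ast + \epsilon,1]$ (uniformly strictly on $M(f)$ when $\epsilon$ is less than the gap) and changes it by $O(\epsilon)$ on $[0,a_\ast + \epsilon]$; for $\epsilon$ small enough the new sup-norm strictly decreases, contradicting optimality. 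The same argument rules out $a_\ast = 0$, so $\min\{a_i\} > 0$. The statement on the right is symmetric.

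For part (iii), the bounds $a_0 \le 1/2 \le a_n$ are immediate from $a_0 = f(0) \le \|f\|_\infty \le 1/2$ and $1-a_n = f(1) \le 1/2$, and the strict preliminary bound upgrades these to $a_0 < 1/2 < a_n$, so $(a_0,a_n)$ is a nonempty open interval. For $M(f) \cap (a_0,a_n) \ne \emptyset$, suppose the contrary; by (i), $M(f) \subset [0,a_0) \cup (a_n,1]$. The two-sided perturbation $a_0 \mapsto a_0 - \epsilon$, $a_n \mapsto a_n + \epsilon$ gives the pointwise change $\epsilon(p^n - (1-p)^n) < 0$ on $p < a_0 < 1/2$, $\epsilon((1-p)^n - p^n) < 0$ on $p > a_n > 1/2$, and $\epsilon(p^n + (1-p)^n) > 0$ on $(a_0,a_n)$. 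As in (ii), closedness of $M(f)$ together with the margin $\|f\|_\infty - \sup_{[a_0,a_n]} f > 0$ yields a strict decrease of the new sup-norm, contradicting optimality. The main obstacle I foresee is the boundary bookkeeping — cleanly eliminating $a_k \in \{0,1\}$ in (i) and securing the strict inequalities in (iii) so that the perturbations are nondegenerate on $M(f)$ — but the underlying variational theme is uniform: optimality precludes any admissible direction along which $\max f$ strictly drops.
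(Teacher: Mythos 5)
Your argument is correct and is essentially the paper's own proof: the positive derivative jump at each $a_k$ for (i), the one-sided perturbation $\min\{a_i\}\mapsto \min\{a_i\}+\epsilon$ for (ii), the evaluations $f(0)=a_0$, $f(1)=1-a_n$ against the benchmark $\|D(1/2,\dots,1/2;\cdot)\|_\infty\le 1/2$ for the first half of (iii), and the symmetric two-sided perturbation $a_0\mapsto a_0-\epsilon$, $a_n\mapsto a_n+\epsilon$ with pointwise change $\pm\epsilon\left[(1-p)^n-p^n\right]$ for the second half. Your extra Cauchy--Schwarz comparison with the SEME giving the strict bound $\|f\|_\infty\le 1/(2(1+\sqrt{n}))$ is a nice (though unneeded) refinement, and your attention to nodes at the endpoints is, if anything, more careful than the published argument.
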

\begin{proof}
The proof of part (i) easily follows from the fact that, from \eqref{penalty}, the derivative $f^\prime (p)$ has a positive jump as $p$ passes through $a_i$ and, thus, $f(p)$ cannot be increasing/decreasing as we pass through $a_i$.

\noindent As for part (ii), suppose that $M(f)\cap [0,\min \{a_i\}) = \emptyset$. Then, $\displaystyle \max_{[0,\min \{a_i\})} f(p)<\|f \|_\infty$. Since by (i), $\min \{a_i\} \not\in M(f)$, we have that there is a $\delta>0$ s.t. $\displaystyle \max_{[0,\min \{a_i\}+\delta]}f(p)<\|f \|_\infty$. But then, $\| D(a_0,a_1,\ldots,\min \{a_i\}+\delta,\ldots, a_n;p)\|_\infty<\|f\|_\infty$, which contradicts the optimality of $f(p)$. The argument that $M(f)\cap [\max \{a_i\},1)\not= \emptyset$ is similar.

\noindent To prove (iii) we first notice that $a_0\leq1/2\leq a_n$. Indeed, it is easily seen that \linebreak{$\|D(1/2,\dots,1/2;p)\|_\infty \leq 1/2$}, which implies that $\|f\|_\infty \leq 1/2$. Therefore, {$a_0=f(0)\leq \| f\|_\infty\leq 1/2$}. Similarly, $1-a_n = f(1)\leq 1/2$, and thus $a_0\leq 1/2\leq a_n$. We note that, as a consequence, we also have that $\min\{ a_i \}\leq a_0\leq 1/2\leq a_n\leq \max\{ a_i \}$.

\noindent Suppose now that $M(f)\cap[a_0, a_n]=\emptyset$.  Since $a_i\not\in M(f)$, there is $\delta>0$, such that $[a_0-\delta,a_n+\delta]\cap M(f)=\emptyset$. Using a similar argument as in the proof of (ii), we have that for $\epsilon>0$ small enough and $0\leq p \leq a_0 - \delta<1/2$,
\[
\begin{split}
D(a_0-\epsilon,\dots,a_n + \epsilon,\dots,a_n ; p)&=D(a_0,\dots,a_n;p)-\epsilon\left[ (1-p)^n-p^n \right] \\ &<D(a_0,\dots,a_n;p)
\end{split}\]

\noindent We can also prove the same inequality for $a_n+\delta < p\leq 1$ from which we conclude that

 \[
 \|D(a_0-\epsilon,a_1,\dots, a_{n-1}, a_n + \epsilon ; p)\|_\infty<\| D(a_0,\dots,a_n;p)\|_\infty=\|f \|_\infty,
  \]
\noindent which is a contradiction with the optimality of $f$.
\end{proof}

The following theorem establishes the {\em equimax property} for our minimax estimation and represents one of the two main results of this paper,
the other being Theorem \ref{LimDistr}. In addition, the ``well-ordering" of such optimal choice is also proved.

\vskip 3mm

\begin{theorem}\label{thm:equimax}
Suppose that
\begin{equation}f(p):=D(a_0, \dots , a_n;p)=D(T;p)\end{equation} is an optimal penalty function. Then the node set $T$ satisfies $a_0<a_1<\cdots <a_n $ and the {\em equimax property} holds, that is: $M(f)\cap (a_i ,a_{i+1} )\not= \emptyset$, $i=0,\dots,n-1$.
\end{theorem}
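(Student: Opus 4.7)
My strategy combines Lemma 2.1 with a first-order perturbation argument in the spirit of Kilgore and de Boor-Pinkus. I first sort the nodes as $b_0\le b_1\le\cdots\le b_n$ and split $[0,1]$ into the $n+2$ subintervals $I_0=[0,b_0)$, $I_j=(b_{j-1},b_j)$ for $j=1,\dots,n$, and $I_{n+1}=(b_n,1]$. Lemma 2.1 already provides that $M(f)$ is disjoint from the node set, meets both $I_0$ and $I_{n+1}$, and meets at least one interior $I_j$. The plan is to upgrade the last ``at least one'' to ``every'' (the equimax conclusion), which will force the $b_j$'s to be distinct, and then to deduce the well-ordering $a_0<a_1<\cdots<a_n$.

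The central tool is the one-sided partial derivative, valid for $p\notin\{a_0,\dots,a_n\}$:
\[
\frac{\partial D(a;p)}{\partial a_i} \;=\; -\binom{n}{i}\,p^i(1-p)^{n-i}\,\mathrm{sgn}(p-a_i).
\]
Since on each subinterval $f$ is a polynomial of degree at most $n+1$, the set $M(f)$ is finite. Suppose for contradiction that some $I_{j^*}$ contains no point of $M(f)$. A perturbation direction $\delta\in\mathbb{R}^{n+1}$ satisfying
\[
\sum_{i=0}^n \delta_i \binom{n}{i} p^i(1-p)^{n-i}\,\mathrm{sgn}(p-a_i) \;>\; 0 \quad \text{for every } p\in M(f)
\]
would, by Danskin's theorem, yield $\|D(a+t\delta;\cdot)\|_\infty<\|f\|_\infty$ for small $t>0$, contradicting the optimality of $f$. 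By Gordan's theorem of the alternative, this system is infeasible only if there exist nonnegative weights $(\mu_p)_{p\in M(f)}$, not all zero, with $\sum_p\mu_p\,p^i(1-p)^{n-i}\,\mathrm{sgn}(p-a_i)=0$ for every $i=0,\dots,n$. The crux is to show that no such $\mu$ exists once $M(f)$ is supported in at most $n+1$ of the $I_j$: the unsigned Bernstein system $\{p^i(1-p)^{n-i}\}_{i=0}^n$ is a totally positive basis for $\mathcal{P}_n$, and the sign pattern $\mathrm{sgn}(p-a_i)$ is constant on each $I_j$ and forms a monotone staircase across the intervals (a block of $-$'s above a block of $+$'s with strictly increasing switch points). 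Combining these two facts via Laplace expansion produces a sum of strictly positive monomials in $p_k$, $1-p_k$, and $p_\ell-p_k$, and therefore cannot vanish (verified directly for $n=1$, where the relevant $2\times 2$ determinant reduces to $p_1-p_2\ne 0$).

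Once equimax is established, $M(f)$ contains at least one point in every $I_j$, forcing the $b_j$'s to be distinct. The well-ordering $a_0<a_1<\cdots<a_n$ then follows from a swap-type argument applied to any out-of-order pair $(a_j,a_{j+1})$ with $a_j>a_{j+1}$: writing $a'$ for the swap, one has
\[
D(a;p)-D(a';p) \;=\; (|p-a_j|-|p-a_{j+1}|)\cdot \binom{n}{j}\,p^j(1-p)^{n-j-1}\cdot\frac{j+1-(n+1)p}{j+1},
\]
which has at most two sign changes on $(0,1)$, and combining this with the partial derivative formula above produces an explicit perturbation direction that strictly decreases $\|f\|_\infty$, again contradicting optimality. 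The main obstacle I foresee is the general-$n$ nondegeneracy claim in the second paragraph: a fully rigorous proof likely proceeds by induction on $n$, exploiting the staircase sign structure and the total positivity of the Bernstein basis, or by a continuous deformation argument reducing to the ``full equimax'' case $|M(f)|=n+2$, where the kernel of the Gordan system is one-dimensional and of constant sign by Chebyshev alternation theory.
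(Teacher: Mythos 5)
Your framework (first-order perturbation at the active set, plus a theorem of the alternative) is a legitimate way to attack minimax problems of this type, and the setup is sound: Lemma 2.1(i) guarantees $M(f)$ avoids the nodes, so the directional-derivative formula and the Danskin/Gordan reduction apply. But the proof has a genuine gap exactly where you flag it, and that gap is the entire content of the theorem. The claim that the dual system $\sum_{p}\mu_p\,p^i(1-p)^{n-i}\,\mathrm{sgn}(p-a_i)=0$, $i=0,\dots,n$, has no nonnegative nontrivial solution whenever $M(f)$ misses an interior subinterval is not established by your sketch. Note first that ``$M(f)$ supported in at most $n+1$ of the $I_j$'' does not bound $|M(f)|$ by $n+1$: on each subinterval $f$ is a polynomial of degree $n+1$ and may contribute several active points, all sharing the same sign vector $(\mathrm{sgn}(p-a_i))_i$ but giving distinct Bernstein columns, so the $(n+1)\times|M(f)|$ matrix can have a large kernel and the real question is whether that kernel meets the nonnegative orthant. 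Deciding this from total positivity plus the staircase sign pattern is precisely the Bernstein--Erd\H{o}s-type difficulty; ``Laplace expansion produces a sum of strictly positive monomials'' is an assertion, not an argument, and you verify it only for $n=1$. The well-ordering step is likewise incomplete: your swap identity is correct, but since $D(a;\cdot)-D(a';\cdot)$ changes sign on $(0,1)$, the swap by itself does not reduce the sup-norm, and the promised ``explicit perturbation direction'' obtained by ``combining with the partial derivative formula'' is never produced.

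For contrast, the paper bypasses all of this machinery with one explicit two-node perturbation that handles both claims and needs no nondegeneracy of $M(f)$. If $(a_i,a_{i+1})$ carries no absolute maximum (or if $a_i>a_{i+1}$), it moves $a_i$ and $a_{i+1}$ by the specific amounts $\psi=\epsilon(a_i+a_{i+1})/(2-a_i-a_{i+1})$ and $\delta=\epsilon(i+1)/(n-i)$. The rate $\delta$ is tuned so that the two changes share the common factor $\epsilon\binom{n}{i}p^i(1-p)^{n-i}$, and the leftover factor is $\pm\bigl(\tfrac{p}{1-p}-\tfrac{c}{1-c}\bigr)$ with $c=(a_i+a_{i+1})/2$; monotonicity of $x/(1-x)$ then forces a strict pointwise decrease of the penalty everywhere outside small neighborhoods of $a_i$ and $a_{i+1}$, where there is slack by Lemma 2.1(i) (respectively by the assumption that the interval carries no maximum). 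If you want to salvage your route, you would need to prove the infeasibility of the Gordan system in general $n$ — an induction or alternation argument you have only conjectured — whereas the paper's explicit perturbation gets the contradiction directly.
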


\begin{proof} First, we will prove that for an optimal penalty function $D(T;p)$ the node set $T$ is well-ordered, i.e. $a_i\leq a_{i+1}$ for all $i=0,\dots, n-1$. Indeed, suppose it is not. Then there is an index $i<n$ such that $a_i>a_{i+1}$. We will perturb the node set $T$ to obtain a penalty function with smaller norm.

\noindent Select $\epsilon>0$ small enough so that  $\|f \|>f(p)$ for $p\in(a_i -\psi, a_i +\psi) \cup (a_{i+1}-\delta, a_{i+1}+\delta)$, where

\begin{equation}\label{psidelta}
\psi:=\epsilon\,(a_i+a_{i+1})/(2-a_i -a_{i+1})\,,\;\delta:=\epsilon \, (i+1)/(n-i)
\end{equation}
and $\max(\psi,\delta)<(a_i-a_{i+1})/2$. Denote by $T_{\epsilon}$ the node set obtained by perturbing the nodes $a_i$ and $a_{i+1}$ to $a_i-\psi$ and $a_{i+1}+\delta$, respectively. Then, from \eqref{penalty}, we have

\begin{equation*}
D(T_\epsilon;p)-D(T;p) =
\epsilon\, {n \choose i}p^i (1-p)^{n-i} g(p),
\end{equation*}
where

\begin{equation*}
g(p):=\left \{ \begin{array}{lc}
\displaystyle{- \frac{a_i+a_{i+1}}{2-a_i-a_{i+1}}+\frac{p}{1-p}},\quad&p\leq a_{i+1}\\
&\\
\displaystyle{ -\frac{a_i+a_{i+1}}{2-a_i-a_{i+1}}-\frac{p}{1-p}},\quad&a_{i+1}+\delta \leq p \leq a_{i}-\psi\\
&\\
\displaystyle{\frac{a_i+a_{i+1}}{2-a_i-a_{i+1}}-\frac{p}{1-p}},\quad&p\geq a_{i}\,,\\
\end{array}\right.
\end{equation*}

\noindent and using the fact that $x/(1-x)$ is an increasing function, it is easy to see that $g(p)<0$ for all $p\in [0,1]\setminus \{(a_i -\psi, a_i) \cup (a_{i+1}, a_{i+1}+\delta) \}$. Additionally, as $f(p)<\|f\|$ on $(a_i -\psi, a_i +\psi) \cup (a_{i+1}-\delta, a_{i+1}+\delta)$, by selecting $\epsilon>0$ smaller if needed, we can guarantee $\| D(T_\epsilon;p)\|<\| f \|$, a contradiction with the optimality of $f$. This implies that for optimal penalty function the node set $T$ is well-ordered, i.e. $a_0<a_1<\cdots <a_n $.

\noindent Next, we prove the {\em equimax property}. Denote the global maxima on the consecutive subintervals by $\mu_{-1}:= \max_{[0,a_{0}]} f(p)$, $\mu_j:=\max_{[a_j,a_{j+1}]} f(p)$, $j=0,2,\dots, n-1$, and  $\mu_n:= \max_{[a_{n},1]} f(p)$. We want to show that
$\mu_0=\dots=\mu_{n-1}= \|f\|_{\infty}$ (we already know from Lemma \ref{lem:maxima} that $\mu_{-1}=\mu_{n}= \|f\|_{\infty}$).

\noindent By contradiction, assume that for some $i\in \{0,\ldots,n-1\}$ we have $\mu_i< \|f\|_{\infty}$. Then, as in the first part of the proof, we will construct a perturbation of the initial set of nodes, $T_{\epsilon}$, for which the corresponding penalty function has a smaller norm.

\noindent Fix $\epsilon>0$ small enough so that $\mu_i (T_\epsilon)<\|f\|_{\infty}$, where $T_\epsilon := \{a_0,\dots,a_i -\psi, a_{i+1} + \delta ,\dots, a_n \}$, with $\delta$ and $\psi$ given in \eqref{psidelta} (notice that now we are enlarging the original interval, while above it was shortened).
We will show that $\mu_k (T_\epsilon)<\mu_k (T)$ for all $k$, from which we will obtain $\|D(T_\epsilon;p \|< \|f\|_{\infty}$, a contradiction with the optimality of $f$.

\noindent Indeed, let $q \in [a_k,a_{k+1}]$ be such that $D(T_\epsilon;q)=\mu_k (T_\epsilon)$. Then

$$D(T_\epsilon;q)-D(T;q) =
\epsilon\, {n \choose i}q^i (1-q)^{n-i} h(q),$$ where

\begin{equation*}
h(q):=\left \{ \begin{array}{lc}
\displaystyle{- \frac{a_i+a_{i+1}}{2-a_i-a_{i+1}}+\frac{q}{1-q}},\quad&q<a_i\\
&\\
\displaystyle{\frac{a_i+a_{i+1}}{2-a_i-a_{i+1}}-\frac{q}{1-q}},\quad&q>a_{i+1}\,,\\
\end{array}\right.
\end{equation*}

\noindent and, thus, the fact that $x/(1-x)$ is an increasing function implies that $\mu_k (T_\epsilon)<\mu_k\leq \|f\|_{\infty}$ for all $k\not= i$. Since the choice of $\epsilon$ implies that $\mu_i (T_\epsilon)< \|f\|_{\infty}$, then we obtain the desired contradiction.
\end{proof}

\begin{example} {\bf The Optimal Penalty Function for some values of $n$.} {\rm For illustration, in Figures \ref{3tossplot} - \ref{7tossplot} we present the computational results we obtained for the cases of $n=3, 4, 6$, and $7$ tosses (the plot for $n=5$ was already shown in Figure \ref{FiveToss}). They all confirm the conclusions of Theorem \ref{thm:equimax}. In all of these figures, we plot the optimal choice for the penalty function $D(a_0,...,a_n;p)$, that is, AEME, (red), the Maximum Likelihood function (green), and the maximum value of the optimal penalty function (black).}
\end{example}


\begin{figure}
\centering
\begin{minipage}{.5\textwidth}
  \centering
  \includegraphics[width=.7\linewidth]{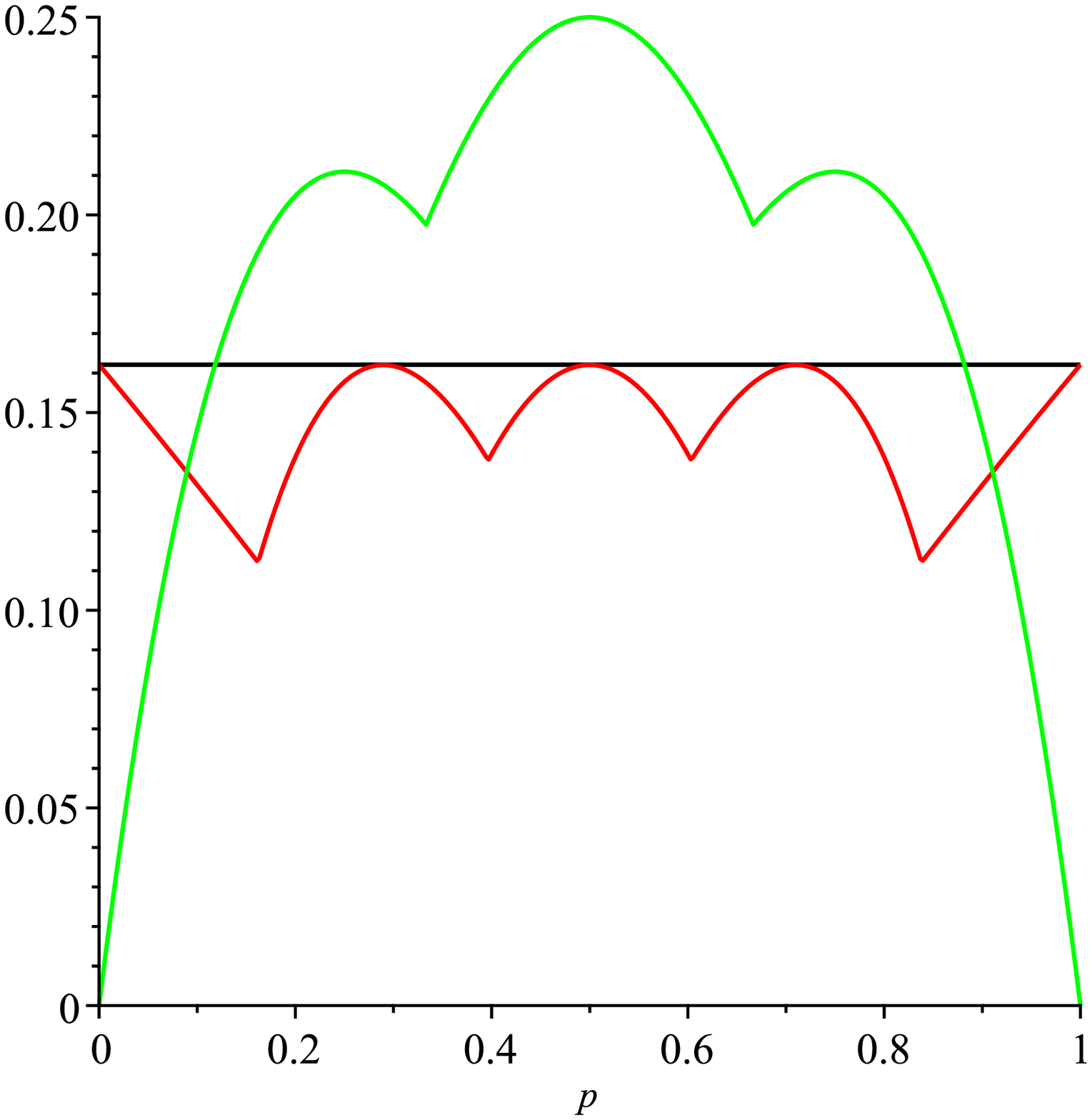}
  \caption{The case of $n=3$ tosses}
  \label{3tossplot}
\end{minipage}%
\begin{minipage}{.5\textwidth}
  \centering
  \includegraphics[width=.7\linewidth]{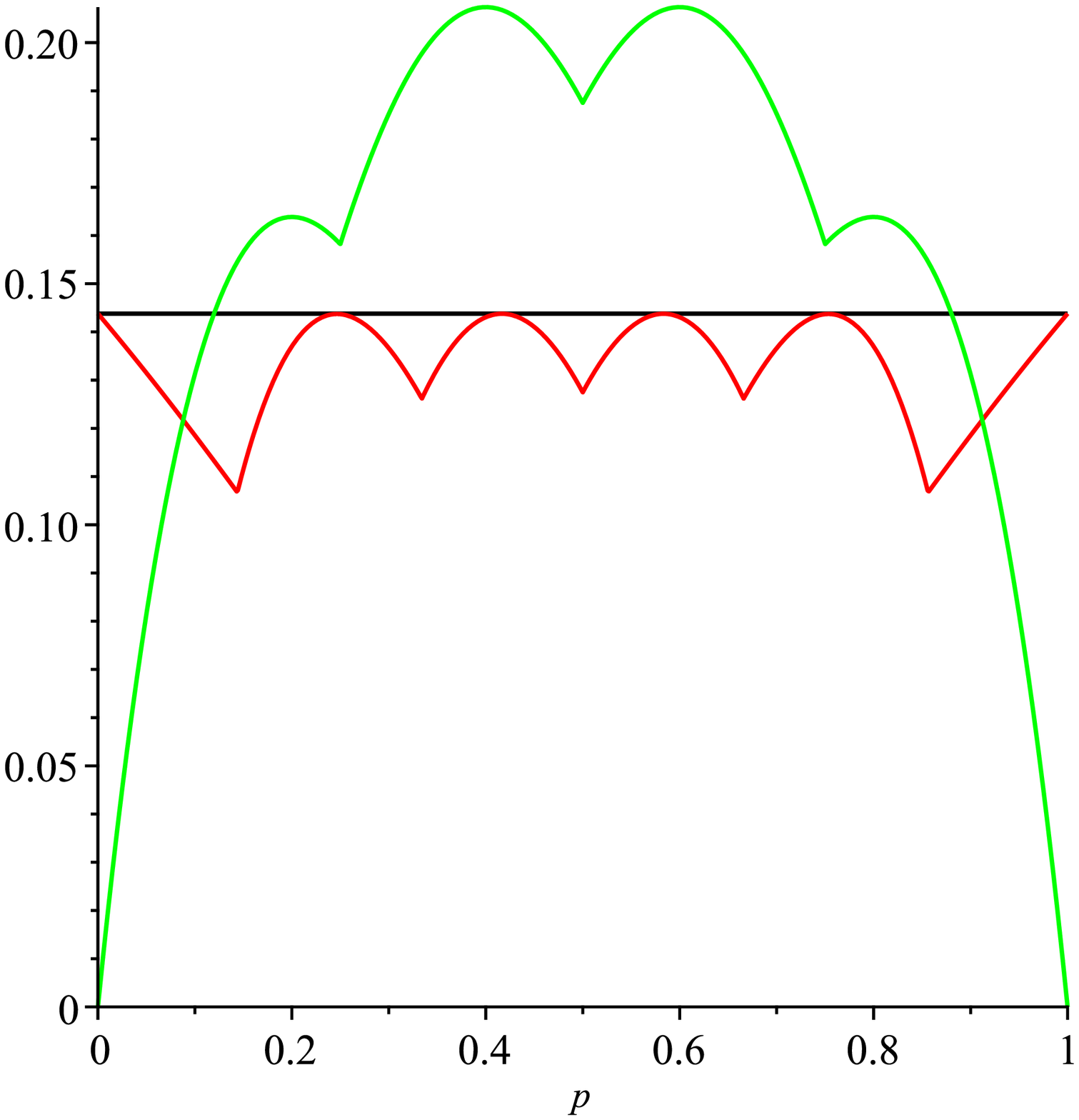}
  \caption{The case of $n=4$ tosses}
  \label{4tossplot}
\end{minipage}
\end{figure}

\begin{figure}
\centering
\begin{minipage}{.5\textwidth}
  \centering
  \includegraphics[width=.7\linewidth]{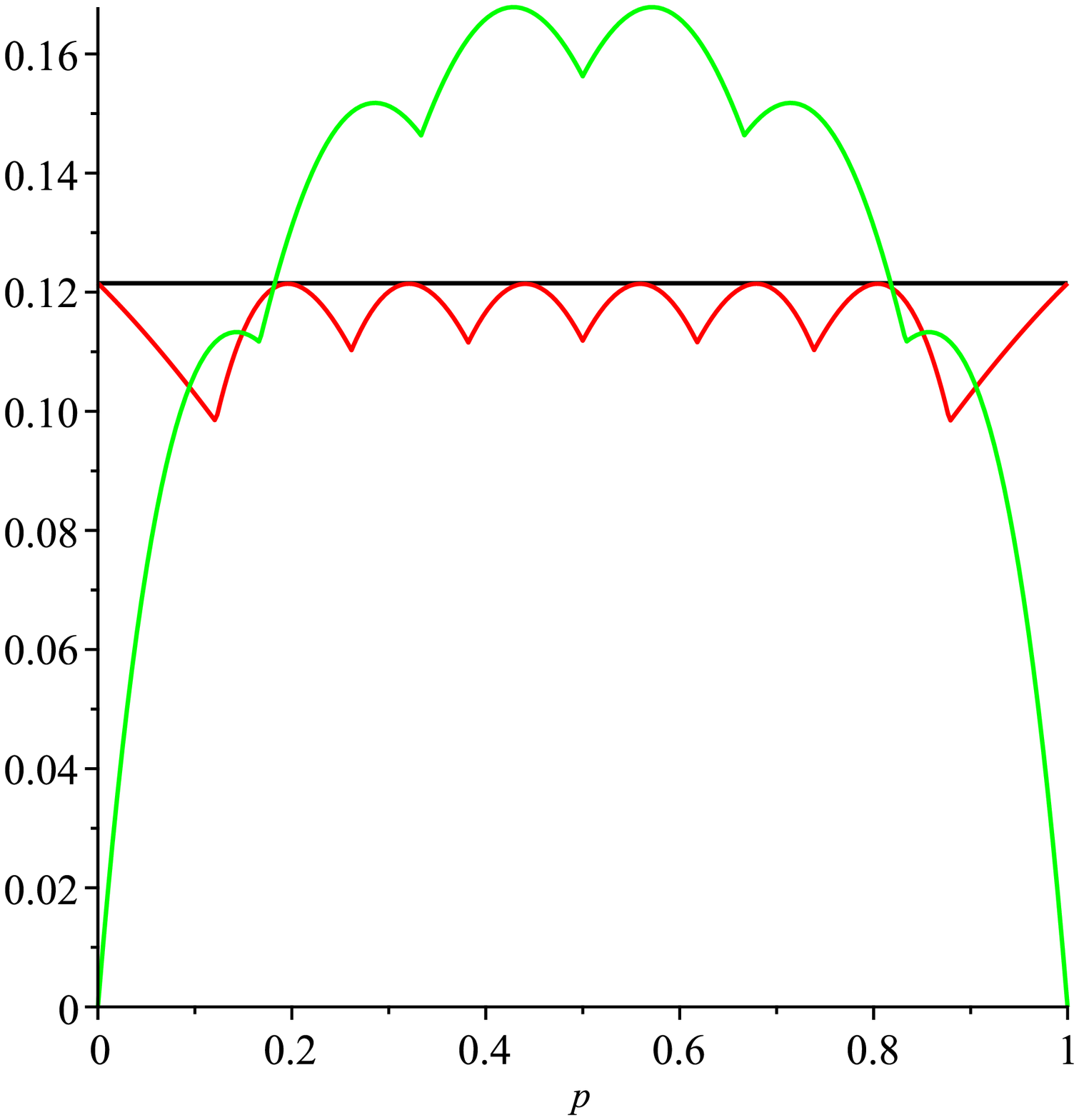}
  \caption{The case of $n=6$ tosses}
  \label{6tossplot}
\end{minipage}%
\begin{minipage}{.5\textwidth}
  \centering
  \includegraphics[width=.7\linewidth]{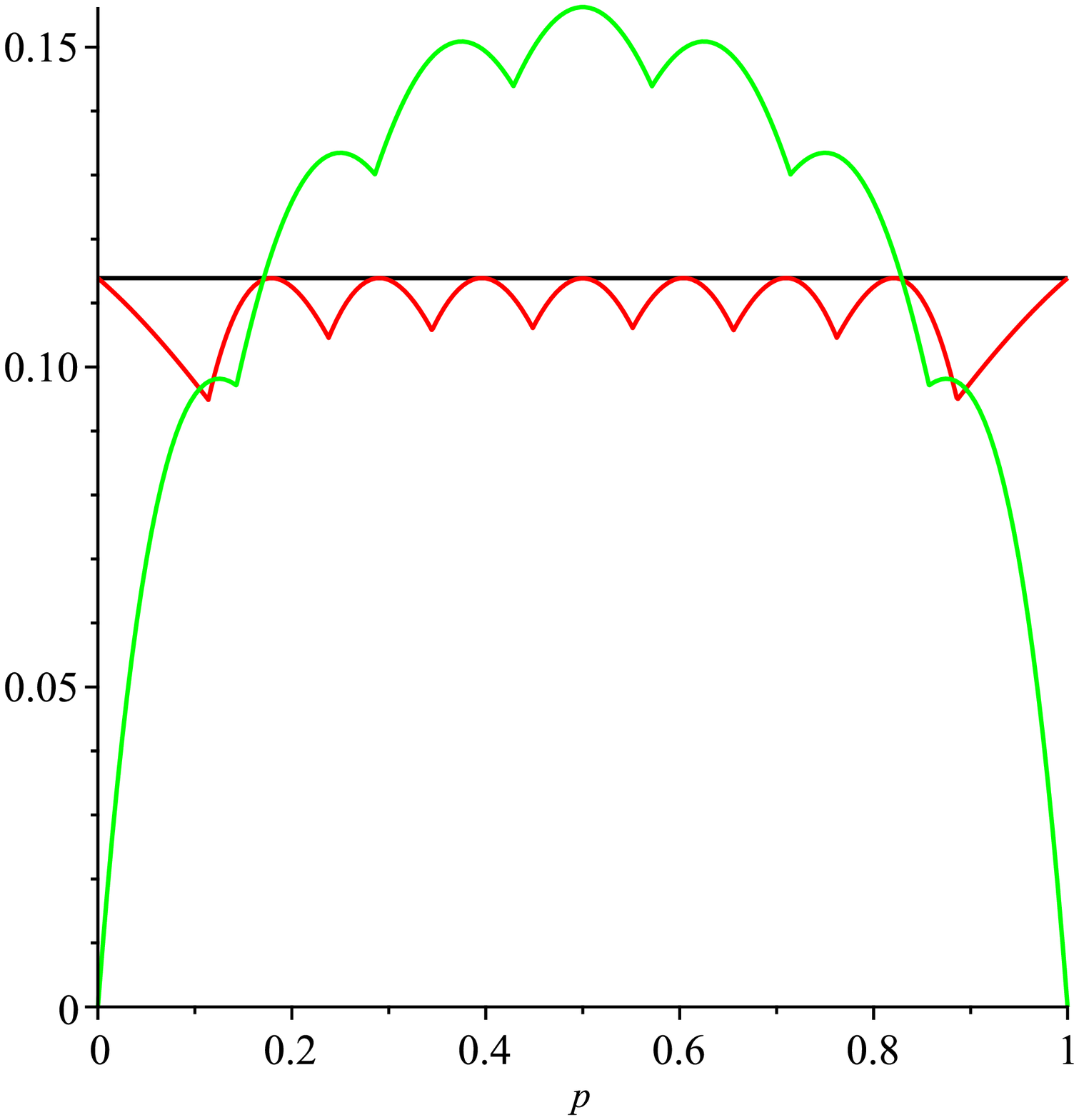}
  \caption{The case of $n=7$ tosses}
  \label{7tossplot}
\end{minipage}
\end{figure}

\begin{remark}\label{constantrisk}
It is relevant to point out here that when using the penalty function $\widehat{D}$ corresponding to the squared error loss function (see \eqref{quadraticloss}), it is shown in \cite[Ch. 5]{LC} that the related minimax estimation (SEME), given by \eqref{quadraticoptimal}, has a constant risk function, as shown in Figure \ref{fig:Dhat}, where the risk function for SEME (blue) is compared with those for AEME (red) and for the Maximum Likelihood (hereafter, MLE).

Since a constant function obviously satisfies the equimax property, this supports our conjecture that this equimax characterization should hold for Minimax Estimators corresponding to any convex loss function.

\begin{figure}[h]
\centering\includegraphics*[width=2.0in]{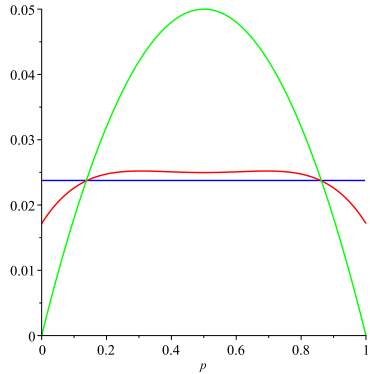}
\caption{Graph of $\widehat{D}(a_0,...,a_n;p)$ in the case of $n=5$ tosses: Maximum Likelihood (green), Absolute Error Minimax Estimator (red), and the Squared Error Minimax Estimator (blue) }
\label{fig:Dhat}
\end{figure}

\end{remark}

\vspace{.5cm}

\setcounter{equation}{0}
\setcounter{theorem}{0}
\section{The asymptotic behavior of the minimax estimations}
\label{Limit}

In this section we shall consider the limiting distribution of the minimax estimations as the number of tosses $n$ approaches infinity.

\begin{definition} For every positive integer $n$ let $A_n:=\{ a_{0n}, a_{1n}, \dots, a_{nn} \}$ denote a {\em minimax estimations set} or, simply, a {\em minimax node set}, namely

\[\|D(A_n,p)\|:=\|D(a_{0n}, \dots, a_{nn};p) \|=\,\min_{x_0,...,x_n \in [0,1]} \| D(x_0,...,x_n;\cdot)\|_\infty .\]

\end{definition}

Observe, that by Theorem \ref{thm:equimax} the node set $A_n$ is ordered. Moreover, if $B_n:=\{k/n\}_{k=0}^n$ denotes the {\em uniform node set} corresponding to the Maximum Likelihood estimation, then
\begin{equation}\label{Cond1}\|D(A_n, p)\|\leq \|D(B_n, p)\|=\mathcal{O} \left(\frac{1}{\sqrt{n}} \right)\longrightarrow 0\quad {\rm as}\quad n\to  \infty.\end{equation}

Indeed, the $\mathcal{O}(1/\sqrt{n})$ estimate follows as an application of the Jensen's inequality to the convex function $f(x)=x^2$
\begin{equation*}
\begin{split}
D(B_n,p)^2&= \left( \sum_{k=0}^n {n \choose k} p^k(1-p)^{n-k} \left| p-\frac{k}{n}\right| \right)^2 \\
&\leq  \sum_{k=0}^n {n \choose k} p^k(1-p)^{n-k} \left( p-\frac{k}{n} \right)^2 = \frac{p(1-p)}{n}\leq \frac{1}{4n},
\end{split}
\end{equation*}
where we used the fact that the mean of the binomial distribution is $\mu=np$ and its variance is $\sigma^2=np(1-p)$.

\vspace{.25cm}

We will prove (see Theorem \ref{LimDistr} below) that the ordering of $A_n$, established in Theorem 3.1, and equation \eqref{Cond1} yield that the limiting distribution is uniform. For this purpose let us remind the reader the definition of weak$^*$ convergence of a sequence of measures.

\begin{definition} \label{Weak*Conv} Let $\{\mu_n \}$ be a sequence of measures supported on $[0,1]$. We say that it converges weakly (or weak$^*$) to a measure $\mu$ if
\begin{equation}\label{weak*1}
\lim_{n\to\infty} \int_0^1 f(t)\, d \mu_n (t) =\int_0^1 f(t)\, d \mu (t) \quad {\rm for\ \ all}\quad f\in C[0,1],
\end{equation}
where $C[0,1]$ denotes all continuous functions on $[0,1]$, or equivalently
\begin{equation} \label{weak*2}
\lim_{n\to \infty} \mu_n ([a,b]) = \mu([a,b]) \quad {\rm for\ all} \quad [a,b]\subset [0,1].
\end{equation}
We denote this as
\[ \mu_n \stackrel{*}{\longrightarrow} \mu, \quad {\rm as}\quad  n\to \infty.\]
\end{definition}

\begin{definition} \label{CountingMeasDef}
Given a finite set $K_n := \{ \alpha_{0n},\alpha_{1n}, \dots, \alpha_{nn} \}$ we call the measure
\begin{equation}
\delta_{K_n} := \frac{1}{n+1}\sum_{j=0}^n \delta_{\alpha_{jn}},
\end{equation}
a  {\em normalized counting measure} of $K_n$. Here $\delta_x$ denotes the Dirac-delta measure at the point $x$.
\end{definition}

\begin{theorem}\label{LimDistr}
Suppose that the node sets $K_n := \{ \alpha_{0n},\alpha_{1n}, \dots, \alpha_{nn} \}\,\subset\,[0,1]$, $n=1,\dots,\infty$, are ordered and that
\begin{equation}\label{Cond2}\lim_{n\to \infty} \|D(K_n, p)\|= 0.\end{equation}

Then the asymptotic distribution of $K_n$, as $n$ tends to infinity, is uniform, namely
\begin{equation}\label{ConvThm} \delta_{K_n} \stackrel{*}{\longrightarrow} dx, \quad {\rm as}\quad  n\to \infty,\end{equation}
where $dx$ denotes the Lebesgue measure on the interval $[0,1]$.
\end{theorem}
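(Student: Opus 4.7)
The plan is to translate the hypothesis $\|D(K_n;\cdot)\|_\infty\to 0$ into a statement about Bernstein polynomials applied to a sequence of monotone functions, and then close with a Helly-type compactness argument. Since $K_n$ is ordered, define a non-decreasing function $\tau_n:[0,1]\to[0,1]$ by piecewise-linear interpolation through the points $(k/n,\alpha_{kn})$, $k=0,\ldots,n$. The key observation is that the binomial weights in \eqref{penalty} form a probability distribution, so
\[
B_n(\tau_n;p)-p \;=\; \sum_{k=0}^n \binom{n}{k} p^k(1-p)^{n-k}\,(\alpha_{kn}-p),
\]
and therefore $|B_n(\tau_n;p)-p|\leq D(K_n;p)$. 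Hypothesis \eqref{Cond2} then yields $B_n(\tau_n;p)\to p$ \emph{uniformly} on $[0,1]$.

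Next I apply Helly's selection theorem to the uniformly bounded, monotone family $\{\tau_n\}$ to extract a subsequence $\tau_{n_j}$ converging pointwise, at every continuity point, to some non-decreasing $\tau:[0,1]\to[0,1]$. The central step is to show that $B_{n_j}(\tau_{n_j};p)\to\tau(p)$ at each continuity point $p$ of $\tau$. Given $\varepsilon>0$, I pick auxiliary continuity points $p\pm\delta$ of $\tau$ with $|\tau(p\pm\delta)-\tau(p)|<\varepsilon$; monotonicity of $\tau_{n_j}$ combined with pointwise convergence at $p\pm\delta$ gives $|\tau_{n_j}(q)-\tau(p)|<2\varepsilon$ for all $q\in[p-\delta,p+\delta]$ and all large $j$. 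Since the Bin$(n_j,p)$ distribution concentrates on $|k/n_j-p|<\delta$ with mass tending to $1$ (Chebyshev bound $p(1-p)/(n_j\delta^2)$), splitting the Bernstein sum into the near and far parts immediately yields $B_{n_j}(\tau_{n_j};p)\to\tau(p)$. Combined with the uniform convergence of step one, $\tau(p)=p$ at every continuity point of $\tau$. A standard left/right-limit argument for monotone functions then forces $\tau\equiv\mathrm{id}$; since this limit is the same for every convergent subsequence and each $\tau_n$ is monotone, the whole sequence converges uniformly: $\|\tau_n-\mathrm{id}\|_\infty\to 0$.

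Finally, to convert $\|\tau_n-\mathrm{id}\|_\infty\to 0$ into the weak$^*$ statement \eqref{ConvThm}, fix $[a,b]\subset(0,1)$ and $\varepsilon>0$; for $n$ large enough the inclusions
\[
\{k:k/n\in[a+\varepsilon,b-\varepsilon]\}\;\subseteq\;\{k:\alpha_{kn}\in[a,b]\}\;\subseteq\;\{k:k/n\in[a-\varepsilon,b+\varepsilon]\}
\]
yield $(b-a)-2\varepsilon+o(1)\leq\delta_{K_n}([a,b])\leq(b-a)+2\varepsilon+o(1)$, and letting $n\to\infty$ followed by $\varepsilon\to 0$ establishes \eqref{ConvThm} through the characterization \eqref{weak*2}.

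The main obstacle is the ``varying-function'' Bernstein convergence in the second step: the classical Weierstrass--Bernstein theorem concerns a single fixed continuous function, whereas here both the operator and the function depend on $n$, and the subsequential limit $\tau$ is a priori only monotone. Monotonicity of $\tau_n$, which is precisely the ordering property supplied by Theorem \ref{thm:equimax}, is what allows one to control $\tau_n$ uniformly on a small interval around $p$ using pointwise convergence at just two nearby continuity points; without this ordering the argument collapses.
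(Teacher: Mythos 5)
Your proof is correct, but it follows a genuinely different route from the paper's. The paper argues by contradiction directly on the counting function: if $|K_n\cap[0,p]|/(n+1)>p+\epsilon$ along a subsequence, it evaluates $D(K_{n_i};\cdot)$ at the single point $q=p+\epsilon/2$, notes that the ordering forces the first $k_i$ nodes to lie at distance at least $\epsilon/2$ from $q$, and then bounds the binomial tail $\sum_{\ell\le k_i}\binom{n_i}{\ell}q^\ell(1-q)^{n_i-\ell}$ from below by $1/2$ using the Kaas--Buhrman result on the median of the binomial distribution; this keeps $D(K_{n_i};q)>\epsilon/4$, contradicting \eqref{Cond2}. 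You instead encode the ordered node set as a monotone interpolant $\tau_n$, observe the clean inequality $\|B_n(\tau_n;\cdot)-\mathrm{id}\|_\infty\le\|D(K_n;\cdot)\|_\infty$, and run a Helly-compactness plus Chebyshev-concentration argument to identify every subsequential limit with the identity. Your varying-function Bernstein step is handled correctly: the two-sided control of $\tau_{n_j}$ near $p$ via monotonicity and convergence at two flanking continuity points is exactly what is needed, and it is where the ordering hypothesis enters (in the paper it enters through the one-sided tail bound instead). The trade-off: the paper's proof is shorter and needs only a pointwise evaluation of $D$, at the cost of citing a sharp external fact about binomial medians; yours is self-contained modulo Helly's selection theorem and delivers the stronger-looking intermediate statement $\max_k|\alpha_{kn}-k/n|\to 0$ (equivalent here to \eqref{ConvThm} by P\'olya's theorem, but more explicit). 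One cosmetic point: your final interval argument treats $[a,b]\subset(0,1)$; since the limit measure $dx$ has no atoms, the cases touching $0$ or $1$ follow at once, but this should be said when matching the characterization \eqref{weak*2}.
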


\begin{proof}
To prove \eqref{ConvThm} it is sufficient to establish that for all $0<p<1$ we have
\[p\leq \liminf_{n\to \infty} \frac{|K_n \cap [0,p]|}{n+1} \leq \limsup_{n\to \infty} \frac{| K_n \cap [0,p] |}{n+1} \leq p .\]
We shall prove the third inequality, the first being similar and the second being obvious.

Suppose that there is a $0<p<1$  for which it fails. Then there is an $\epsilon >0$ such that
\[\limsup_{n\to \infty} \frac{| K_n \cap [0,p] |}{n+1} > p +\epsilon.\]
This implies that there is a subsequence $\{n_i\}$ and a number $M$, such that
\begin{equation}\label{inequality}
\frac{| K_{n_i} \cap [0,p] |}{n_i +1} > p +\frac{\epsilon}{2}=:q \quad {\rm for \ all} \quad i \geq M .
\end{equation}
For every $i$ denote $k_i :=| K_{n_i} \cap [0,p] |$. Then the ordering of $K_{n_i}$ implies that $|q-\alpha_{\ell,n_i}|>\epsilon/2$ for all $\ell\leq k_i$ and hence
\begin{equation}
\begin{split}
D(K_{n_i},q)&=\sum_{\ell=0}^{n_i} {n_i \choose \ell} q^\ell(1-q)^{n_i-\ell} |q-\alpha_{\ell,n_i}| \\
&\geq \sum_{\ell=0}^{k_i} {n_i \choose \ell} q^\ell(1-q)^{n_i-\ell} |q-\alpha_{\ell,n_i}| \\
& > \frac{\epsilon}{2}\sum_{\ell=0}^{k_i} {n_i \choose \ell} q^\ell(1-q)^{n_i-\ell}>\frac{\epsilon}{4},
\end{split}
\end{equation}
where in the last inequality we apply \cite[Theorem 1]{KB}, using the fact that \eqref{inequality} yields that $k_i-qn_i>q$. This contradicts \eqref{Cond2}, which proves the theorem.
\end{proof}

As the minimax node sets $A_n$ are ordered, \eqref{Cond1} allows us to establish the following.
\begin{corollary} \label{cor:weakconv}
The limiting distribution of the minimax node sets $A_n$ is the uniform distribution $dx$ on $[0,1]$.
\end{corollary}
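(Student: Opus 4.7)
The plan is to apply Theorem \ref{LimDistr} to the specific sequence of minimax node sets $K_n := A_n$. To invoke that theorem, two hypotheses must be verified: that each $A_n$ is ordered, and that $\|D(A_n,p)\|\to 0$ as $n\to\infty$. Both are already available in the paper, so the corollary reduces to a short citation of the prior results.

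First I would note that the ordering $a_{0n}<a_{1n}<\cdots<a_{nn}$ is guaranteed by Theorem \ref{thm:equimax}, which shows that any optimal node set is strictly increasing. Second, the convergence $\|D(A_n,p)\|\to 0$ follows from the optimality of $A_n$ together with the estimate \eqref{Cond1}: since $A_n$ minimizes the sup-norm of $D(\,\cdot\,;p)$ over all choices of $n+1$ points in $[0,1]$, we have
\[
\|D(A_n,p)\|\le \|D(B_n,p)\|=\mathcal{O}\!\left(\tfrac{1}{\sqrt n}\right),
\]
where $B_n=\{k/n\}_{k=0}^n$ is the Maximum Likelihood node set and the rate was derived via Jensen's inequality applied to $x^2$, using that the binomial distribution has variance $np(1-p)\le n/4$. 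In particular $\|D(A_n,p)\|\to 0$, so condition \eqref{Cond2} is satisfied.

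With both hypotheses in hand, Theorem \ref{LimDistr} applies directly to the sequence $\{A_n\}$ and yields $\delta_{A_n}\stackrel{*}{\longrightarrow}dx$, which is precisely the statement of the corollary. There is no genuine obstacle beyond the bookkeeping of invoking the earlier results in the right order: the real work was done in Theorem \ref{thm:equimax}, which supplied the ordering property of optimal node sets, and in Theorem \ref{LimDistr}, whose proof combined ordering with a vanishing penalty function to rule out both over- and under-accumulation on any subinterval $[0,p]$.
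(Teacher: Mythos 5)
Your proposal is correct and follows exactly the paper's route: the corollary is obtained by checking the two hypotheses of Theorem \ref{LimDistr} --- ordering of $A_n$ from Theorem \ref{thm:equimax} and $\|D(A_n,p)\|\to 0$ from the comparison \eqref{Cond1} with the uniform node set $B_n$. Nothing further is needed.
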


\begin{remark}\label{rem:universality}

Observe that Theorem \ref{LimDistr} establishes that the limiting distribution is the uniform one not just for the sequence of optimal choices, but for every sequence of ``acceptable'' strategies, in the sense that they are well ordered and the sup-norm of their corresponding penalty functions approaches zero, as $n\rightarrow \infty$. Thus, we see that all these acceptable estimators are asymptotically unbiased, in the sense that their limit distribution as $n\rightarrow \infty$, is the uniform distribution, the same as for the Maximum Likelihood estimators, which are the unique unbiased estimators for the parameter $p$.

It is also remarkable that this conclusion also holds for the sequence of Squared Error Minimax Estimators given by \eqref{quadraticoptimal}, which is easy to check.

\end{remark}

\setcounter{equation}{0}
\setcounter{theorem}{0}
\section{A problem in Game Theory}
\label{Description}

As it was said above, now we are going to see our estimation problem from the viewpoint of the Game Theory (see e.g. \cite{BG}, \cite{Fudenberg} or \cite{Osborne}). In particular, a non-cooperative, two-player, zero-sum and mixed-strategy game will be posed, as we explain below.

Indeed, we are dealing with a simple two-player game, where Player I selects a probability $p\in [0,1]$ and creates a
coin such that $P(heads)=p$. He tosses the coin $n$ times and provides the number $i\in \{0,1,...,n\}$ of heads observed to Player II. Then, based on this value,
Player II makes a guess $a_i\in [0,1]$ for the value of $p$ and he will pay a loss of $|p-a_i|$  to Player I. Obviously, the goal of Player I is to maximize this loss, while
Player II wants to minimize it.

More generally, let us assume that both players are allowed to follow what is commonly referred to as a ``mixed strategy'' in the Game Theory framework: that is, the choices of the players are not deterministic (``pure strategy''), but the available actions are selected according to certain probability distributions. Thus, let $\Omega$ denote the set of all probability distributions on the interval $[0,1]$. Suppose that when
making their decisions, Player I is allowed to choose $\mu \in \Omega$ and he picks $p$ to
follow the distribution $d\mu$, and Player II picks $x_i$ to follow his choice of $d\sigma_i$
distributions, where $\sigma_i \in \Omega, \ i=0,1,...,n$.

Therefore, the expected penalty of Player II is
\begin{equation}
E(\sigma_0,...,\sigma_n;\mu):=\int \cdots \int D(x_0,...,x_n;p) d\sigma_0(x_0) ... d\sigma_n(x_n) d\mu(p),
\end{equation}
where $D(x_0,...,x_n;p)$ is the penalty function given in \eqref{penalty}. In these terms, the goal of Player I will be to find
\begin{equation} \label{maximin} \max_{\mu\in\Omega} \min_{\sigma_0,...,\sigma_n \in\Omega} E(\sigma_0,...,\sigma_n;\mu)\,,\end{equation}
while the second player will try to get
\begin{equation} \label{minimax} \min_{\sigma_0,...,\sigma_n \in\Omega} \max_{\mu\in\Omega} E(\sigma_0,...,\sigma_n;\mu\,).\end{equation}

Using again the terminology from Game Theory, this is a ``zero-sum'' game (that is, the total gains of players minus the total losses add up to zero). Now, we are looking for the so-called mixed-strategy Nash-equilibrium. The basic notion of equilibrium in Game Theory finds its roots in the work by Cournot (1838), but was formalized in the celebrated papers by Nash, \cite{N1}-\cite{N2}, where the (now called) Nash-equilibrium was established for finite games by using Kakutani's Fixed Point Theorem \cite{Ka}. The problem for continuous games, where the players may choose their strategies in continuous sets (as in the present case), is more involved.

The following result, establishing the mixed-strategy Nash-equilibrium for our problem, is a direct application of the Glicksberg's Theorem \cite{Glicksberg}, who made use of an extension of Kakutani's Theorem to convex linear topological spaces. An alternate method of proof consists in taking finer and finer discrete approximations of our continuous game, for which the existence of a Nash-equilibrium was established, and then using the continuity of \eqref{penalty} and standard arguments of weak convergence (for more information about the successive extensions of the Nash-equilibrium problem, see for example the monographs \cite{Fudenberg} and \cite{Osborne}; there are also many papers about such extensions from the point of view of the applications to Business, see e.g. \cite{McKe} and \cite{COP}, to only cite a few).

\begin{theorem}
\begin{equation} \label{Nash generalized}
\min_{\sigma_0,...,\sigma_n \in\Omega} \max_{\mu\in\Omega} E(\sigma_0,...,\sigma_n;\mu) =
\max_{\mu\in\Omega} \min_{\sigma_0,...,\sigma_n \in\Omega} E(\sigma_0,...,\sigma_n;\mu).
\end{equation}
\end{theorem}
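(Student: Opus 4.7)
The plan is to equip $\Omega$ with the weak-* topology, verify the hypotheses of a continuous-game minimax theorem (Sion's, or Glicksberg's via fixed points) for $E$ on $\Omega^{n+1}\times\Omega$, and conclude. Since $[0,1]$ is a compact metric space, $\Omega$ is a compact, convex, metrizable subset of $C[0,1]^{*}$ in the weak-* topology, by Banach--Alaoglu together with the Riesz representation theorem (equivalently, by Prokhorov's theorem). The finite product $\Omega^{n+1}$ inherits all three properties and serves as Player II's mixed-strategy space, while $\Omega$ plays that role for Player I.

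The key structural observation is that in $D(x_0,\ldots,x_n;p)=\sum_{k=0}^n \binom{n}{k} p^k(1-p)^{n-k}|p-x_k|$ each coordinate $x_k$ appears in exactly one summand, so Fubini gives
\begin{equation*}
E(\sigma_0,\ldots,\sigma_n;\mu)=\sum_{k=0}^n \binom{n}{k}\int_0^1\!\!\int_0^1 p^k(1-p)^{n-k}|p-x_k|\,d\sigma_k(x_k)\,d\mu(p).
\end{equation*}
In particular only the marginals $\sigma_k$ matter, so the restriction to product measures in the problem formulation is harmless. Each summand is the integral of a bounded continuous function on $[0,1]^2$ against a product of probability measures, hence jointly weak-* continuous in the pair $(\sigma_k,\mu)$ --- immediate from Stone--Weierstrass applied to $C([0,1]^2)$, or directly from the Portmanteau theorem. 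Thus $E$ is continuous on $\Omega^{n+1}\times \Omega$. The same identity shows that $E$ is \emph{affine} in $\mu$ with the $\sigma_k$'s fixed, and affine in $(\sigma_0,\ldots,\sigma_n)$ with $\mu$ fixed; in particular it is quasi-concave in $\mu$ and quasi-convex in $(\sigma_0,\ldots,\sigma_n)$.

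With continuity, compactness, convexity, and the affine dependence in each argument all in place, Sion's minimax theorem applied to $E$ on the product $\Omega^{n+1}\times\Omega$ yields \eqref{Nash generalized} directly. Equivalently, Glicksberg's theorem produces a mixed-strategy Nash equilibrium for this continuous zero-sum game, whence the value is well defined and the two sides of \eqref{Nash generalized} must coincide. The principal technical obstacle is securing joint weak-* continuity of $E$ in all $n+2$ measure arguments simultaneously, which is cleanly resolved by the marginal decomposition displayed above. As a self-contained alternative, one can discretize $[0,1]$ to a uniform grid, apply von Neumann's finite minimax theorem to the resulting bimatrix game to obtain values $v_N$ and near-optimal mixed strategies, and then pass to a weak-* limit of those strategies via Prokhorov's theorem, using the uniform continuity of $D$ on $[0,1]^{n+2}$ to identify the limiting value with both sides of \eqref{Nash generalized}.
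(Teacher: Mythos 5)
Your proposal is correct and follows essentially the same route as the paper, which likewise obtains \eqref{Nash generalized} as a direct application of Glicksberg's theorem (mentioning the discretization-plus-weak-convergence argument as an alternative, just as you do). The only difference is that you supply the routine verifications the paper leaves implicit — weak-* compactness, convexity and metrizability of $\Omega$, joint continuity of $E$ via the marginal decomposition, and affinity in each argument — and note that Sion's minimax theorem then also applies directly.
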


\vskip 3mm

For our analysis it is important to make use of the following discretization of the probability distribution setting in \eqref{Nash generalized} related to the second player's strategy.
\begin{theorem}\label{thm:discret} The minimax problem for distributions \eqref{Nash generalized} admits the following discretization
\begin{equation}\label{discret}
\begin{split}
\min_{\sigma_0,...,\sigma_n \in\Omega} \max_{\mu\in\Omega} E(\sigma_0,...,\sigma_n;\mu) & = \min_{a_0,...,a_n \in [0,1]} \max_{p\in [0,1]} D(a_0,...,a_n;p) \\
& \\
& =\min_{a_0,...,a_n \in [0,1]} \| D(a_0,...,a_n;\cdot)\|_\infty\,.
\end{split}
\end{equation}
\end{theorem}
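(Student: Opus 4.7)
The plan is to prove the equality by showing two matching inequalities between the quantity
$$F(\sigma_0,\ldots,\sigma_n) \,:=\, \max_{\mu\in\Omega} E(\sigma_0,\ldots,\sigma_n;\mu)$$
and $\|D(a_0,\ldots,a_n;\cdot)\|_\infty$. First I would reduce the outer maximization over $\mu\in\Omega$ to a supremum over $p\in[0,1]$. Since $D(x_0,\ldots,x_n;p)$ is continuous in $p$ on the compact interval $[0,1]$ and the integrand is linear in $\mu$, Fubini's theorem gives
$$E(\sigma_0,\ldots,\sigma_n;\mu) \,=\, \int_0^1 \Phi(\sigma_0,\ldots,\sigma_n;p)\, d\mu(p),\qquad \Phi(\sigma_0,\ldots,\sigma_n;p)\,:=\,\int\!\!\cdots\!\!\int D(x_0,\ldots,x_n;p)\,d\sigma_0(x_0)\cdots d\sigma_n(x_n),$$
so its maximum over probability measures $\mu$ on $[0,1]$ equals $\max_{p\in[0,1]}\Phi(\sigma_0,\ldots,\sigma_n;p)$, attained at a point mass.

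Next I would prove the key inequality $F(\sigma_0,\ldots,\sigma_n) \geq \min_{a_i \in [0,1]}\|D(a_0,\ldots,a_n;\cdot)\|_\infty$ by a Jensen-type reduction of each mixed strategy $\sigma_i$ to its mean. For any $\sigma_i \in \Omega$, set $a_i := \int_0^1 x\,d\sigma_i(x) \in [0,1]$. Since the function $x \mapsto |p-x|$ is convex in $x$ for each fixed $p$, Jensen's inequality gives $\int |p-x|\,d\sigma_i(x) \geq |p - a_i|$, and multiplying by the nonnegative binomial weight $\binom{n}{i}p^i(1-p)^{n-i}$ and summing over $i$ yields
$$\Phi(\sigma_0,\ldots,\sigma_n;p) \,\geq\, D(a_0,\ldots,a_n;p) \qquad \text{for every } p\in[0,1].$$
Taking the max over $p$ on both sides and then the infimum over $\sigma_0,\ldots,\sigma_n$ delivers the inequality $\geq$ in \eqref{discret}.

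For the reverse inequality $\leq$, I would simply note that the discrete strategies are embedded in the mixed setting via point masses: choosing $\sigma_i := \delta_{a_i}$ (which lies in $\Omega$) gives $\Phi(\delta_{a_0},\ldots,\delta_{a_n};p) = D(a_0,\ldots,a_n;p)$, so $F(\delta_{a_0},\ldots,\delta_{a_n}) = \|D(a_0,\ldots,a_n;\cdot)\|_\infty$. Taking the infimum over $(a_0,\ldots,a_n)\in[0,1]^{n+1}$ on the right and comparing with the infimum over all $(\sigma_0,\ldots,\sigma_n)\in\Omega^{n+1}$ on the left yields the matching inequality and completes the proof.

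The main point to get right is the convexity observation: Player II's loss $|p - x_i|$ is convex in the guess $x_i$ for each fixed true parameter $p$, so randomizing the guess can never help him. This is the exact structural reason the outer min in \eqref{Nash generalized} collapses to a pure-strategy minimization, even though the outer max in the dual problem does not. No compactness/continuity obstacle arises since $D$ is continuous on $[0,1]^{n+2}$ and all suprema/infima in question are attained.
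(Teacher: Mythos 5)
Your proposal is correct and follows essentially the same route as the paper's proof: the Jensen-type reduction of each $\sigma_i$ to its mean $a_i=\int_0^1 \theta\, d\sigma_i(\theta)$ via convexity of $x\mapsto|p-x|$ gives one inequality, and the embedding $\sigma_i=\delta_{a_i}$ gives the other. The only cosmetic difference is that you make explicit up front the collapse of $\max_{\mu}$ to $\max_{p}$, which the paper carries out inside the argument.
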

\begin{proof}
Let $a_i:=\int_0^1 \theta \, d\sigma_i (\theta)$, $i=0,1,\dots,n$. Since $\displaystyle \int_0^1\,|p-\theta|\,d\sigma_i(\theta)\,\geq\,|p-a_i|\,,$ we have that
$$E(\sigma_0,...,\sigma_n;\mu)\,\geq\,E(a_0,...,a_n;\mu)\,.$$
Then, by the continuity of $D(a_0,\dots,a_n;p)$, we get
$$ \max_{\mu\in\Omega} E(a_0,...,a_n;\mu)= \| D(a_0,...,a_n;\cdot)\|_\infty.$$
Hence,
\begin{equation}\label{relation1}
\min_{\sigma_0,...,\sigma_n \in\Omega} \max_{\mu\in\Omega} E(\sigma_0,...,\sigma_n;\mu)\,\geq\,\min_{a_0,...,a_n} \max_{p\in [0,1]} D(a_0,...,a_n;p)
\end{equation}
On the other hand, if for fixed points $a_0,\ldots,a_n\in [0,1]$, we take $\displaystyle \sigma_i = \delta_{a_i}\,,$ it is clear that
$E(\sigma_0,...,\sigma_n;\mu) = E(a_0,...,a_n;\mu)$ and, thus
\begin{equation}\label{relation2}
\min_{\sigma_0,...,\sigma_n \in\Omega} \max_{\mu\in\Omega} E(\sigma_0,...,\sigma_n;\mu)\,=\,\min_{a_0,...,a_n \in [0,1]} \max_{\mu\in\Omega} E(a_0,...,a_n;\mu)\,,
\end{equation}
but, for some $\mu^* \in \Omega$,
\begin{equation}
\begin{split}\label{relation3}
\max_{\mu\in\Omega} E(a_0,...,a_n;\mu) &= \int_0^1\,D(a_0,...,a_n;p)\,d\mu^*(p)\\
&\leq\,\max_{p\in [0,1]} D(a_0,...,a_n;p)
=\,\|D(a_0,...,a_n;\cdot)\|_{\infty}\,,
\end{split}
\end{equation}
and this settles the proof of \eqref{discret}.
\end{proof}

\vskip 3mm

\begin{remark}\label{doneinsection2}
The above discretization \eqref{discret} shows that the optimal strategy for Player II described in the previous section (Theorem \ref{thm:equimax}) agrees with the set of the Absolute Error Minimax Estimators (AEME), using the language of Point Estimation Theory.

\end{remark}

Therefore, our main concern now is the Optimal Strategy for the Player I. But in this sense, the arguments used in the proof of Theorem \ref{thm:discret} also have an important consequence for the Player I's strategy. Indeed, if we denote by $\mu^*$ an optimal distribution for the first player and by $\supp \mu^* \subset [0,1]$, its support, then we have
\begin{lemma}\label{lem:support}
\begin{equation}\label{supp}
\supp \mu^* \subset M(f)
\end{equation}
\end{lemma}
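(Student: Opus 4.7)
The plan is to exploit the Nash saddle-point identity to show that $f$ attains its sup-norm value $\mu^*$-almost everywhere, and then invoke continuity of $f$ to pin down the support.

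Let $V:=\|f\|_\infty$, which by \eqref{Nash generalized} and \eqref{discret} equals the common Nash value of the game. Denote by $a^*=(a_0^*,\dots,a_n^*)$ the AEME (the optimal deterministic choice for Player II from Section~\ref{Main}), so that $f(p)=D(a^*;p)$. The critical step is the saddle-point identity
\[
E(a^*;\mu^*)=V.
\]
The inequality $E(a^*;\mu^*)\leq V$ is immediate, since $E(a^*;\mu)=\int_0^1 f(p)\,d\mu(p)\leq \|f\|_\infty=V$ for every $\mu\in\Omega$. For the reverse inequality, optimality of $\mu^*$ for Player~I gives $\min_{\sigma_0,\dots,\sigma_n\in\Omega}E(\sigma_0,\dots,\sigma_n;\mu^*)=V$; the Dirac-mean replacement argument already employed in the proof of Theorem~\ref{thm:discret} (replacing each $\sigma_i$ by $\delta_{\int\theta\,d\sigma_i(\theta)}$) shows that this minimum is attained on deterministic profiles, so $E(a;\mu^*)\geq V$ for every $a\in[0,1]^{n+1}$, and in particular $E(a^*;\mu^*)\geq V$.

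Once this identity is secured, the conclusion follows from a short measure-theoretic step. Writing
\[
V=E(a^*;\mu^*)=\int_0^1 f(p)\,d\mu^*(p)
\]
and using $f(p)\leq V$ pointwise, the nonnegative integrand $V-f(p)$ has zero $\mu^*$-integral, hence $f(p)=V$ for $\mu^*$-almost every $p$. Continuity of $f$ on $[0,1]$ makes $M(f)=\{p:f(p)=V\}$ closed, so $\supp\mu^*$ — the smallest closed set of full $\mu^*$-measure — is contained in $M(f)$.

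The only genuinely delicate point is the two-sided justification of $E(a^*;\mu^*)=V$; the pointwise bound $f\leq V$ handles one direction, while the other relies on the Dirac-replacement trick already carried out in the proof of Theorem~\ref{thm:discret}. No new computation is required beyond these ingredients.
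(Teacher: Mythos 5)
Your proof is correct and follows essentially the same route as the paper: both derive from the Nash identity \eqref{Nash generalized} and the discretization \eqref{discret} that the inequality \eqref{relation3} is an equality for $\mu^*$, i.e.\ $\int_0^1 f\,d\mu^*=\|f\|_\infty$, and then conclude from $f\leq\|f\|_\infty$ and continuity that $\supp\mu^*\subset M(f)$. Your write-up merely makes explicit the measure-theoretic last step that the paper leaves implicit (and the Dirac-replacement trick is not actually needed for the direction $E(a^*;\mu^*)\geq V$, since the deterministic profile $\sigma_i=\delta_{a_i^*}$ is already admissible in the minimum over mixed strategies).
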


\begin{proof}It is enough to realize that in the proof of Theorem \ref{thm:discret}, equations \eqref{relation1}-\eqref{relation2} show that for an extremal measure $\mu^*$ (for which the Nash-equilibrium \eqref{Nash generalized} is attained), \eqref{relation3} is actually an equality and, therefore, $\supp \mu^*$ must be contained in the set where the equality
$f(p)=\|f\|_\infty = \|D(a_0,\ldots,a_n;.)\|_{\infty}\,$ is attained (with $(a_0,\ldots,a_n)$ being an optimal choice for Player II). This establishes \eqref{supp}.
\end{proof}

Theorem \ref{thm:discret} and Lemma \ref{lem:support} show that, from this point on, we may assume that both players follow strategies based on discrete distributions. Indeed, while a Player II's optimal (pure) strategy will be a choice $\{a_0,\ldots,a_n\} \subset [0,1]\,,$ an optimal (mixed) strategy for the Player I will be based on an atomic measure $\displaystyle \mu^* = \sum_{j=1}^k\,m_j\,\delta_{p_j}\,,$ where $\displaystyle \{p_j\}_{j=1}^k \subset M(f)$ and $\sum_{j=1}^k\,m_j = 1\,.$

\begin{example}{The case of $n=1$ toss.{\rm }}
\label{1 toss}
\vskip 2mm
\noindent {\rm Because of the simplicity and the symmetry of the problem, the method to find the strategies satisfying the Nash-equilibrium (\ref{Nash generalized}) can be carried out easily in this simple case by using Lemmas \ref{lem:maxima} and \ref{lem:support} above. Therefore, we skip the details.

\noindent The optimal  discrete strategy $\mu$ of Player I is the following: choose the atomic measure $\displaystyle \mu = \sum_{k=0}^2\,m_i\,\delta_{p_i}$, with $p_0=0, p_1=0.5, p_2=1$ and
the corresponding weights given by $m_0=0.25, m_1=0.5, m_2=0.25$.
Further, for $a_0\in [0,0.5], a_1\in [0.5,1]$, we have $E(a_0,a_1;\mu)=0.25$ and for any $a_0,a_1\in [0,1]$ we have $0.25\le E(a_0,a_1;\mu)$.
Thus, for any $\sigma_0,\sigma_1\in $ we have $0.25\le E(\sigma_0,\sigma_1;\mu)$.

\noindent The optimal discrete strategy of Player II is the following: $\sigma_0$ is the unit point mass at $a_0=0.25$, and  $\sigma_1$ is the unit point mass at $a_1=0.75$.
The graph of $f(p)=D(0.25,0.75;p)=(1-p)|p-0.25|+p|p-0.75|$ is shown in Figure \ref{1tossplot}.
Since for all $p\in[0,1]$ we have $f(p)\le 0.25$, we conclude that for any $\mu\in\Omega$, $E(\sigma_0,\sigma_1;\mu)\le 0.25$.
So the Nash-equilibrium is established, and if both players follow the outlined strategies, then Player II pays $E(a_0,a_1;\mu)=0.25$ dollars to Player I.}

\begin{figure}[h]
\centering\includegraphics*[width=3.0in]{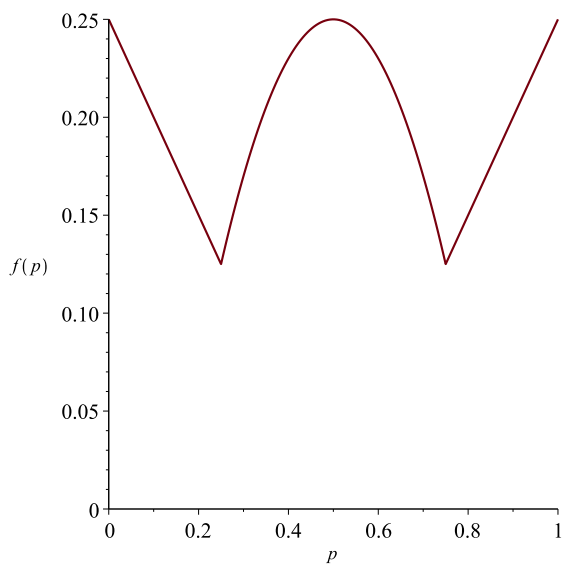}
\caption{Graph of $f(p)$ in the case of $n=1$ toss.}
\label{1tossplot}
\end{figure}
\end{example}\

\begin{remark}\label{rem:leastfav}
Going back to the Point Estimation Theory approach, the results in previous Theorem 4.2 and Lemma 4.4 admit an interpretation in terms of the connection between Bayes and Minimax estimators, as
mentioned in the Introduction. Indeed, Theorem 5.1.4 and especially Corollary 5.1.5 in \cite{LC} establish sufficient conditions to ensure that a Bayes estimator is also a Minimax one, namely, that the average risk (or penalty) of the Bayes estimator $\delta_{\Lambda}$ for a certain prior distribution $\Lambda$ (see \eqref{Bayesrisk}) agrees with the value of the maximum of that risk and, hence, that for such distribution $\Lambda$, the risk function must be constant. If this is the case, the prior distribution $\Lambda$ is said to be a \emph{least favorable} one. In this sense, our results are a sort of converse of the ones in \cite{LC}.

When the squared error loss function is used, one could see in Figure \ref{fig:Dhat} that the risk function for the corresponding Minimax estimator is constant throughout the interval $[0,1]$. In this case, it is proven in \cite[Ex. 5.1.7]{LC} that this Minimax estimator is indeed a Bayes estimator with respect to a continuous distribution supported in $[0,1]$, namely the Beta distribution $\displaystyle \mathcal{B} \left(\frac{\sqrt{n}}{2}, \frac{\sqrt{n}}{2}\right)$. Therefore, this last distribution plays the role of a least favorable one and its corresponding Bayes estimator is proven to be unique, hence, by \cite[Th. 5.1.4]{LC}, it is also unique as Minimax estimator.
Taking into account our previous results, this least favorable distribution would represent the optimal strategy for Player I in this case.

The above connection for our absolute error loss function is more involved and it will be discussed in the next section, where the Nash-equilibrium for the case of $n=2$ tosses is thoroughly analyzed.

\end{remark}

\setcounter{equation}{0}
\setcounter{theorem}{0}
\section{A constructive proof of the Nash-equilibrium for the case of $n=2$ tosses}
\label{Main1}

Now, we are concerned with the existence and uniqueness of a strategy pair solving the Nash equilibrium \eqref{Nash generalized} in the case of $n=2$ tosses. Our method will be based on previous Theorem \ref{thm:equimax} and Lemmas \ref{lem:maxima}-\ref{lem:support}.

Recall that the strategy of Player II is to minimize the maximum of the penalty function, namely determine optimal outcomes $\{a_0^*, a_1^*, a_2^* \}$ defining an optimal penalty function
$f(p):=D(a_0^*,a_1^*,a_2^*;p)$ such that

\begin{equation}
\label{PlayII}
\min_{\{a_0, a_1, a_2 \}\subset [0,1]} \max_p D(a_0, a_1, a_2;p)=\min_{\{a_0, a_1, a_2 \}\subset [0,1]} \| D(a_0,a_1,a_2;p) \|_\infty= \|f\|_\infty\,.
\end{equation}

\noindent That such an $f$ exists follows easily by a compactness argument.

The strategy of Player I is to find a probability measure $d\mu^*(p)$, supported on $[0,1]$, that maximizes the expected penalty no matter what the choice of Player II is, i.e. determine
\begin{equation}
\label{PlayI}
\mathcal{F}:=\max_{\mu} \min_{\{a_0, a_1, a_2 \} \subset [0,1]}\int_0^1 D(a_0, a_1, a_2; p) d\mu (p).
\end{equation}
Clearly, for any $\mu \in \Omega$,
\[\int_0^1 D(a_0,a_1,a_2;p) d\mu (p) \leq \|D(a_0,a_1,a_2;p)\|_\infty ,\]
so,
\[ \min_{\{a_0, a_1, a_2 \} \subset [0,1] }\int_0^1 D(a_0,a_1,a_2;p) d\mu (p)\leq \|f\|_\infty ,\]

\noindent which implies, after taking the $\max$ over all $\mu$, that $\mathcal{F}\leq \|f\|_\infty$. Then, our goal in this section is to find an optimal strategy pair
$\{a_0^*, a_1^*, a_2^* \}\subset [0,1],\,\mu^* \in \Omega,$ for which the Nash-equilibrium, $\mathcal{F} = \|f\|_\infty$, is uniquely reached.

\vskip 1mm

The main result in this section is stated as follows.

\begin{theorem}\label{thm:2tosses}
The Nash-equilibrium is reached if the players use the following strategies:
\begin{itemize}

\item [\textbf{Player I:}] Choose $p$ according to the distribution $\displaystyle \mu = \sum_{i=0}^3\,m_i\,\delta_{p_i}\,,$ where
\begin{equation}\label{p1}
p_1 = \frac{1}{3} \Big( 1+ \sqrt[3]{1+3\sqrt{57}} - \frac{8}{\sqrt[3]{1+3\sqrt{57}}} \Big) \approx 0.3611
\end{equation}
is the unique real root of the polynomial $x^3-x^2+3x-1$, and
\begin{equation}p_2 = 1-p_1 \approx 0.6389\,,\,p_0 = 0, p_3 = 1\,.
\end{equation}
The weights $m_i$ are given by:
\begin{equation}\label{weights}
\begin{split}
m_1&=\frac{0.5}{p_1^2+(1-p_1)^2+1} \approx 0.325 \\
m_0 &= 0.5 - m_1 \approx 0.175\,,\;m_2 = m_1\,,\;m_3 = m_0\,.
\end{split}
\end{equation}

\item [\textbf{Player II:}] Choose the following values $a_i$
\begin{equation}\label{ai}
a_0 = \frac{2p_1(1-p_1)^2}{p_1^2+(1-p_1)^2+1}  \approx 0.1916\,,\;a_2 = 1- a_0 \approx 0.8084\,,\; a_1 = 0.5\,.
\end{equation}

\end{itemize}

Furthermore, the above pair of strategies is unique in the following sense: if the choice of distributions $\{\sigma_0, \sigma_1, \sigma_2,\mu \}$ satisfies the Nash-equilibrium \eqref{Nash generalized}, then $E(\sigma_i)=a_i, i=0,1,2$, and $\displaystyle \mu= \sum_{i=0}^3 m_i \delta_{p_i}$, where $a_i, m_i$ and $p_i$ are as above.

\end{theorem}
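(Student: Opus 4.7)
My plan is to split the argument into three stages paralleling Theorems \ref{thm:equimax}--\ref{thm:discret} and Lemma \ref{lem:support}: first identify Player II's optimal pure strategy via the equimax property plus the $p\leftrightarrow 1-p$ symmetry; then determine $\mu^{*}$ from Lemma \ref{lem:support}; and finally verify the Nash property and its uniqueness.

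\emph{Stage A (Player II).} Since $D(a_0,a_1,a_2;p)=D(1-a_2,1-a_1,1-a_0;1-p)$ and $(a_0,a_1,a_2)\mapsto \|D(a_0,a_1,a_2;\cdot)\|_\infty$ is convex, I would symmetrize any optimum to work under the ansatz $a_1^*=1/2$, $a_0^*+a_2^*=1$. On $[0,a_0^*]$ the absolute values open with fixed signs and the would-be cubic in $p$ collapses to the linear function $f(p)=a_0^*(1-2p)$, so the max on this subinterval is $f(0)=a_0^*$; by symmetry the max on $[a_2^*,1]$ is also $a_0^*$. On $[a_0^*,1/2]$ one obtains $f(p)=-a_0^*+2(1+a_0^*)p-2(2+a_0^*)p^2+2p^3$, whose unique critical point $p_1\in(a_0^*,1/2)$ must by Theorem \ref{thm:equimax} satisfy both $f'(p_1)=0$ and $f(p_1)=a_0^*$. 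Eliminating $p_1^2$ and $p_1^3$ between these two equations reduces the coupled system to the single cubic $3a_0^3+4a_0^2+20a_0-4=0$, whose derivative $9x^2+8x+20$ has negative discriminant, giving a unique real root in $(0,1/2)$. A standard Cardano computation on $x^3-x^2+3x-1$ (using $(1+3\sqrt{57})(1-3\sqrt{57})=-512$) then produces exactly the expression \eqref{p1} and verifies the identity $a_0^*=2p_1(1-p_1)^2/[p_1^2+(1-p_1)^2+1]$; by symmetry $p_2=1-p_1$ is the critical point on $[1/2,a_2^*]$, so $M(f)=\{0,p_1,p_2,1\}$.

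\emph{Stage B (Player I).} By Lemma \ref{lem:support}, $\supp \mu^*\subset M(f)$, so $\mu^*=\sum_{i=0}^{3}m_i\delta_{p_i}$. The Player II objective decouples as $E(a;\mu^*)=\sum_{k=0}^{2}g_k(a_k)$ with $g_k(a)=\binom{2}{k}\sum_i m_i p_i^k(1-p_i)^{2-k}|p_i-a|$ convex piecewise linear. Because $a_0^*\in(0,p_1)$, $a_1^*=1/2\in(p_1,p_2)$, and $a_2^*\in(p_2,1)$ all lie strictly in gaps between atoms of $\mu^*$, the conditions $g_k'(a_k^*)=0$ yield the three linear relations $m_0=m_1(1-p_1)^2+m_2 p_1^2$, $m_1=m_2$, and $m_3=m_1 p_1^2+m_2(1-p_1)^2$. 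Combined with $\sum_i m_i=1$, these force $m_0=m_3$, $m_1=m_2$, and $m_1[1+p_1^2+(1-p_1)^2]=1/2$, matching \eqref{weights} with all $m_i>0$. The Nash property for this pair is then immediate: $\int f\,d\mu^*=\|f\|_\infty$ because $\mu^*$ concentrates on $M(f)$, while $f\le\|f\|_\infty$ pointwise prevents any other $\mu$ from doing better.

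\emph{Stage C (Uniqueness) and main obstacle.} For any Nash pair $(\sigma_0,\sigma_1,\sigma_2,\mu)$ of value $V$, set $a_k:=\int \theta\,d\sigma_k(\theta)$ and $g(p):=E(\sigma_0,\sigma_1,\sigma_2;\delta_p)$. Jensen's inequality gives $g\ge D(a_0,a_1,a_2;\cdot)$ pointwise; Player I's best-response gives $\|g\|_\infty=V$, so $\|D(a;\cdot)\|_\infty\le V$, and the minimax lower bound $\|D(a;\cdot)\|_\infty\ge V$ (Theorem \ref{thm:discret}) forces equality, so $a$ is itself an equimax optimizer and by Stage A $a_k=a_k^*$. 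Player II's best-response condition combined with $\int g\,d\mu=V=\int D(a;p)\,d\mu$ forces Jensen equality at every atom of $\mu$, which means each $\sigma_k$ is supported on a single side of each atom; together with $\supp\mu\subset M(f)=\{0,p_1,p_2,1\}$ this pins each $\sigma_k$ into a single one of $[0,p_1]$, $[p_1,p_2]$, $[p_2,1]$ with mean $a_k^*$. The weights of $\mu$ are then uniquely determined by the linear system of Stage B. The main obstacle I expect is the algebra of Stage A—simultaneously solving $f(p_1)=a_0^*$ and $f'(p_1)=0$ and matching the resulting $p_1$ to the Cardano form \eqref{p1}; a secondary subtlety is the uniqueness (invoked in Stage C) of the \emph{a priori} non-symmetric equimax optimizer, for which the symmetrization-plus-convexity argument combined with strict monotonicity of $3x^3+4x^2+20x-4$ is what closes the loop.
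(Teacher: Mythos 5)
Your proposal is correct in substance and arrives at the same equilibrium, but it reverses the order of the paper's argument. The paper first imposes the stationarity conditions $\partial E/\partial a_i=0$ (precisely your Stage B relations, with $\beta_i$ in place of $m_i$) as \emph{constraints}, which turns Player I's problem into the constrained maximization \eqref{CMP}; after eliminating variables this becomes the minimization of $\bigl(\tfrac{1}{x(1-x)}+\tfrac{1}{y(1-y)}-2\bigr)/(x+y)$, convexity of $1/[x(1-x)]$ forces $x=y$ (hence $q=1-p$ and the symmetric weights), and a one-variable calculus problem yields $(1-p)^3=2p^2$, i.e.\ the cubic $x^3-x^2+3x-1$; only then are the $a_i$ read off from the game value $E(p_1)$. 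You instead solve Player II's minimax problem head-on via the equimax property, solving $f(p_1)=a_0$ and $f'(p_1)=0$ on the cubic piece. Your algebra checks out: eliminating $p_1$ from the two resulting quadratics gives $p_1=(2-6a_0+a_0^2)/[2(1+a_0+a_0^2)]$ and a resultant that factors as $3a_0(a_0+2)(3a_0^3+4a_0^2+20a_0-4)$, confirming your cubic for $a_0$; and your first-order conditions $g_k'(a_k^*)=0$ are exactly the paper's constraint rows, so the weights agree. Your route costs more algebra in Stage A but makes the saddle-point verification cleaner (convexity of each $g_k$ plus $\supp\mu^*\subset M(f)$ from Lemma \ref{lem:support}); the paper's route gets the symmetry $q=1-p$, $\beta_2=\beta_1$, $\beta_3=\beta_0$ essentially for free from a one-line convexity argument on Player I's side.

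The one genuine soft spot is the symmetry ansatz $a_1^*=1/2$, $a_0^*+a_2^*=1$ in Stage A. For existence it is harmless (symmetrization under convexity of $a\mapsto\|D(a;\cdot)\|_\infty$ produces a symmetric optimizer), but your Stage C uniqueness argument needs that \emph{every} minimax configuration is the symmetric one, and symmetrization only shows that the midpoint of an optimizer and its reflection is again an optimizer; it does not exclude asymmetric optimizers, and strict convexity is not available here. Note that Lemma \ref{lem:01} already gives $f(0)=f(1)=\|f\|_\infty$, hence $a_0+a_2=1$, with no symmetry assumption; what remains is to show that the two interior equimax conditions (four polynomial equations in $a_0,a_1,p,q$ with $a_2=1-a_0$) admit only the solution with $a_1=1/2$ and $q=1-p$. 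This should be closed explicitly rather than deferred to ``strict monotonicity of $3x^3+4x^2+20x-4$,'' which only settles uniqueness \emph{within} the symmetric family; the paper's device of deriving $q=1-p$ from Player I's optimization before touching $a_1$ is one way to do it.
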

The graph of the optimal penalty function given in Theorem \ref{thm:2tosses}, $f(p)=D(a_0,a_1,a_2;p)$, is shown in Figure 8. Observe that the interlacing property $p_0 < a_0 < p_1 < a_1 < p_2 < a_2 < p_3$ holds.

For the proof of Theorem \ref{thm:2tosses}  two technical lemmas are needed.
\begin{lemma} \label{lem:bounds}

The optimal choice $\{a_0, a_1, a_2 \}$ satisfies $0<a_0<0.2<0.4<a_1<0.6<0.8<a_2<1$. In addition, $a_1-a_0 < 0.4$ and $a_2-a_1 < 0.4$.

\end{lemma}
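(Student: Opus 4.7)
The plan is to first establish the strict upper bound $\|f\|_\infty < 0.2$ for the optimal penalty function $f=D(a_0,a_1,a_2;\cdot)$, and then extract every inequality claimed in the lemma from this single fact, combined with Lemma 2.1 and the equimax structure of Theorem 2.2.

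To obtain $\|f\|_\infty < 0.2$ I would exhibit an explicit admissible (sub-optimal) strategy with sup-norm strictly below $0.2$. A clean candidate is the symmetric triple $(a_0,a_1,a_2)=(0.19,\,0.5,\,0.81)$, for which $D(0.19,0.5,0.81;p)=D(0.19,0.5,0.81;1-p)$, so it suffices to analyze the penalty on $[0,1/2]$. On $[0,0.19]$ the cubic terms of the penalty cancel (as in the general expansion used in the proof of Theorem 2.2) and the expression collapses to the linear function $0.19(1-2p)$, whose maximum there equals $0.19$. On $[0.19,0.5]$ the penalty is a cubic whose derivative is an explicit quadratic; solving this quadratic locates the unique interior critical point and a direct evaluation shows the value there lies below $0.2$ (numerically about $0.1926$). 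By optimality of $f$, $\|f\|_\infty \leq \|D(0.19,0.5,0.81;\cdot)\|_\infty < 0.2$.

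Given the bound $\|f\|_\infty < 0.2$, the individual bounds on the $a_i$ follow by evaluating $f$ at well-chosen points. First, $f(0)=a_0$ and $f(1)=1-a_2$ immediately yield $a_0<0.2$ and $a_2>0.8$; the strict positivity $a_0>0$ and $a_2<1$ come from Lemma 2.1(ii), which forces the sets $[0,\min\{a_i\})$ and $(\max\{a_i\},1]$ to be non-empty. Second, since $a_0<1/2<a_2$,
\[
f(1/2) \,=\, \frac{a_2-a_0}{4} + \frac{|1/2-a_1|}{2}\,,
\]
and combining with $a_2-a_0>0.6$ and $f(1/2)\leq\|f\|_\infty < 0.2$ forces $|1/2-a_1|<0.1$, hence $a_1\in(0.4,0.6)$.

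For the gap bounds I argue by contradiction. Suppose $a_1-a_0\geq 0.4$ and set $p^*:=(a_0+a_1)/2$. Using $a_0\in(0,0.2)$ and $a_1\in(0.4,0.6)$ places $p^*\in(0.2,0.4)$ and yields $a_2-p^*>0.4$. A short computation using $|p^*-a_0|=|p^*-a_1|=(a_1-a_0)/2\geq 0.2$ gives
\[
f(p^*) \,=\, \frac{a_1-a_0}{2}\,(1-(p^*)^2) + (p^*)^2(a_2-p^*) \,>\, 0.2,
\]
with strict inequality because $a_2-p^*>0.2$. This contradicts $\|f\|_\infty<0.2$, so $a_1-a_0<0.4$; the symmetric midpoint argument on $[a_1,a_2]$ gives $a_2-a_1<0.4$. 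The main technical obstacle lies in step one: since the interior maximum of the cubic on $[0.19,0.5]$ sits near $0.1926$, the verification is tight against the threshold $0.2$, and the cubic analysis has to be carried out carefully; a slightly more generous choice of $c=a_0$ in the test strategy $(c,1/2,1-c)$ would relax the arithmetic at the cost of worse numerical constants, but still requires closed-form handling of the critical point of the cubic.
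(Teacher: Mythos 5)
Your proof is correct. The opening move---exhibiting a symmetric test triple $(c,1/2,1-c)$ with sup-norm below $0.2$ to conclude $\|f\|_\infty<0.2$, and then reading off $a_0<0.2$ and $a_2>0.8$ from $f(0)=a_0$, $f(1)=1-a_2$---is exactly the paper's (the paper takes $c=0.195$, for which the maximum is $0.195$ and is attained at the endpoints, making the verification a little less delicate than your $c=0.19$, whose maximum $\approx 0.1925$ sits at the interior critical point of the cubic). After that the two arguments diverge. The paper extracts everything else from the single pointwise inequality $\min_i|p-a_i|<0.2$ for all $p\in[0,1]$, which holds because $D(a_0,a_1,a_2;p)$ is a convex combination of the $|p-a_i|$: positivity of $a_0$ and $a_2<1$ follow by a pigeonhole/covering argument, the gap bounds $a_{i+1}-a_i<0.4$ by looking at midpoints of consecutive nodes, and $0.4<a_1<0.6$ by testing $p=0.4$ and $p=0.6$. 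You instead obtain $a_0>0$ and $a_2<1$ from Lemma \ref{lem:maxima}(ii) (the intervals $[0,\min\{a_i\})$ and $(\max\{a_i\},1]$ must be nonempty to meet $M(f)$), the $a_1$ bound from the exact evaluation $f(1/2)=\tfrac{a_2-a_0}{4}+\tfrac{1}{2}|1/2-a_1|$ combined with $a_2-a_0>0.6$, and the gap bounds from evaluating $f$ at the midpoints of $[a_0,a_1]$ and $[a_1,a_2]$ via the identity $(1-p)^2+2p(1-p)=1-p^2$. Both routes are sound; the paper's is slightly more uniform in that one inequality does all the work, while yours trades the covering argument for explicit evaluations of the quadratic form of $f$ at well-chosen points, which is equally rigorous and arguably more self-contained.
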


\begin{proof} It is easy to check that for the symmetric choice $\{a_0,a_1,a_2\}$, where $a_0 = .195\,,\,a_1 = .5\,,\,a_2 = 1-a_0 = .805$, we have that $\|D(a_0,a_1,a_2;p)\|_\infty=0.195\,<\,0.2$. Therefore, $\| f \|_\infty <0.2$. Since $f(0)=a_0$ and $f(1)=1-a_2$ we immediately obtain that $a_0<0.2$ and $0.8<a_2$.
On the other hand, if there exists $p\in[0,1]$ such that $|p-a_i | \geq 0.2$, $i=0,1,2$, then $\|f \|_\infty \geq \min_i |p-a_i | \geq 0.2$, which is a contradiction. Thus, $\min_i |p-a_i |<0.2 $ holds for all $p\in[0,1]$. This implies that $0<a_0$ and $a_2<1$ (otherwise, the Dirichlet Pigeonhole Principle implies there exists a $p$ such that $\min_i |p-a_i |\geq 0.2$). The fact that $\min_i |p-a_i |<0.2\,,\, p\in[0,1]\,,$ also implies that $a_1-a_0 < 0.4$ and $a_2-a_1 < 0.4$.

\noindent To derive that $0.4<a_1<0.6$ we need only use the values $p=0.4$ and $p=0.6$ to determine that from $\min_i |0.4-a_i |<0.2$ we must have $|0.4-a_1|<0.2$, or $a_1<0.6$, and from $\min_i |0.6-a_i |<0.2$ we must have $|0.6-a_1|<0.2$, or $0.4<a_1$.
\end{proof}

\begin{remark}\label{symchoice}
While the ``test''values used in the above proof might seem, at first glance, quite arbitrary, their usefulness can be easily shown by a simple numerical experimentation. In particular, for such a symmetric choice with $0<a_0\,,\,a_1 = .5$ and $a_2 = 1-a_0 < 1$, we see that the restrictions of the penalty function $f$ to the ``end'' subintervals $[0,a_0]$ and $[1-a_0,1]$ are straight lines whose respective maximum values (attained at the endpoints $0$ and $1$) are given by $a_0$, and the restrictions to the ``central'' intervals $[a_0,.5]$ and $[.5,1-a_0]$, are concave functions. This fact will be used again in the proof of Theorem \ref{thm:2tosses} below.
\end{remark}

Now, as a consequence of Lemma \ref{lem:bounds} we have the following result, which completes the previous Lemma \ref{lem:maxima}

\begin{lemma}\label{lem:01}
For an optimal choice $\{a_0, a_1, a_2 \}\,,$ we have that $\{0,1\} \subset M(f)\,.$
\end{lemma}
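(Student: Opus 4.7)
The plan is to prove $0 \in M(f)$; the claim $1 \in M(f)$ will follow by a symmetric argument on $[a_2,1]$. First, I would exploit that on $[0,a_0]$ every $|p - a_i|$ equals $a_i - p$, so using the identity $\sum_{k=0}^{2} \binom{2}{k} p^k (1-p)^{2-k} = 1$, the penalty function $f$ reduces on $[0,a_0]$ to the quadratic
\begin{equation*}
f(p) = a_0 + A\, p + B\, p^2, \qquad A := 2(a_1-a_0) - 1, \quad B := a_0 - 2a_1 + a_2.
\end{equation*}
From Lemma~\ref{lem:bounds} the bound $a_1 - a_0 < 0.4$ yields $A < -0.2 < 0$, which is the essential sign control at the left endpoint.

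Next I would split on the sign of $B$. If $B \leq 0$, then $f'(p) = A + 2Bp \leq A < 0$ for every $p \in [0, a_0]$, so $f$ is strictly decreasing on $[0,a_0]$ and its maximum there is attained at $p = 0$. If $B > 0$, then $f$ is strictly convex on $[0,a_0]$, so its maximum on this closed interval is attained only at the endpoints. In either case $\max_{[0,a_0]} f \in \{ f(0), f(a_0) \}$. Now Lemma~\ref{lem:maxima}(i) gives $a_0 \notin M(f)$, so $f(a_0) < \|f\|_\infty$, while Lemma~\ref{lem:maxima}(ii) ensures some point of $[0, a_0)$ attains $\|f\|_\infty$, whence $\max_{[0,a_0]} f = \|f\|_\infty$. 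Combining these forces $f(0) = \|f\|_\infty$, i.e., $0 \in M(f)$.

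The same strategy works on $[a_2,1]$: each $|p - a_i| = p - a_i$ there, so $f$ is once again quadratic, with $f'(1) = 1 - 2(a_2 - a_1) > 0.2 > 0$ thanks to the bound $a_2 - a_1 < 0.4$ from Lemma~\ref{lem:bounds}. A parallel case analysis on the leading coefficient, combined with $a_2 \notin M(f)$ and $M(f) \cap (a_2, 1] \neq \emptyset$ from Lemma~\ref{lem:maxima}, yields $f(1) = \|f\|_\infty$, so $1 \in M(f)$.

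The main (mild) obstacle is the convex subcase $B > 0$: the derivative test alone does not rule out an interior maximum or guarantee that the endpoint maximum lies at $0$ rather than $a_0$; there one genuinely needs both parts of Lemma~\ref{lem:maxima} to force the conclusion. All other steps are routine bookkeeping, showing that the quadratic coefficients computed above are under sufficiently tight control via Lemma~\ref{lem:bounds}.
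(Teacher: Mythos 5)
Your proof is correct and follows essentially the same route as the paper: both reduce $f$ on $[0,a_0]$ to the explicit quadratic $a_0+\bigl(2(a_1-a_0)-1\bigr)p+(a_0-2a_1+a_2)p^2$, use the bound $a_1-a_0<0.4$ from Lemma \ref{lem:bounds} to control the sign of the linear coefficient, and invoke parts (i) and (ii) of Lemma \ref{lem:maxima} to force the maximum to sit at the endpoint $0$ (and symmetrically at $1$). The paper phrases the case analysis as ``the only way the maximum could be interior is a concave parabola with vertex at positive abscissa, which would require $a_1-a_0>1/2$,'' but this is the same argument as your split on the sign of the leading coefficient.
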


\begin{proof} Indeed, consider the restriction of $f(p)$ to $[0,a_0]$. Then,
$$f(p)=(a_0-2a_1+a_2)p^2 -(1+2a_0-2a_1)p+a_0.$$
Since the global maximum is attained on $[0,a_0]$, and it is not at $a_0$,
the only possibility of it not being at $p=0$ is when $a_0-2a_1+a_2<0$ and the $x$-coordinate of the parabola's vertex $(1/2+a_0-a_1)/( a_0-2a_1+a_2)>0$, or $1/2+a_0-a_1<0$.
This implies that $a_1-a_0>1/2$, which, as shown in Lemma \ref{lem:bounds}, is impossible if $f$ is the optimal solution of \eqref{PlayII}. Therefore, we derive that $0\in M(f)$.
In a similar fashion, one gets $1\in M(f)$.
\end{proof}

\vspace{3mm}
\noindent {\bf Proof of Theorem \ref{thm:2tosses}}. From Lemmas 2.3-2.4, Theorem 3.1 and Lemma 4.4, we already know that

\[ d\mu(p)=\beta_0\delta_0+\beta_1\delta_{p}+\beta_2 \delta_{q} +\beta_3 \delta_1 , \]
for some $\beta_i\geq 0$, $\beta_0+\beta_1+\beta_2+\beta_3=1$, and $p \in (a_0,a_1)$, $q\in (a_1,a_2)$. Since we are in the case of Nash-equilibrium, the quantity
\begin{eqnarray*} &&E(a_0,a_1, a_2;\mu) = \mathcal{E}(a_0,a_1, a_2; p,q):=\int_0^1 f(r)\, d\mu(r)\\
&& \quad =\beta_0 f(0)+\beta_1 f(p)+\beta_2 f(q)+\beta_3 f(1) \\
&& \quad =
a_0\left[ \beta_0 -\beta_1(1-p)^2-\beta_2(1-q)^2\right] +a_1\left[2\beta_1 p(1-p)-2\beta_2 q(1-q)\right]\\
&& \quad \ +a_2\left[ \beta_1 p^2+\beta_2 q^2 -\beta_3 \right] +\beta_3-\beta_1 p+\beta_2 q +2\beta_1 p(1-p)^2-2\beta_2 q^3.
\end{eqnarray*}
is a global minimum (w.r.t. $\{a_0, a_1, a_2\} \in (0,1)$), which implies that \[\frac{\partial \mathcal{E}}{\partial a_i}=0. \quad i=0,1,2.\]
Therefore, the coefficients in front of $a_0, a_1$, and $a_2$ vanish for the given choice of $p,\ q$, and $\beta_i$, $i=0,1,2,3$. Hence, the optimization problem \eqref{PlayII} becomes a constrained minimization problem
\begin{equation}\label{CMP}
\begin{split}
& Maximize \quad \beta_3-\beta_1 p+\beta_2 q +2\beta_1 p(1-p)^2-2\beta_2 q^3\\
& \\
&Subject \ to\ \left \{ \begin{array}{rcrcrcrcr}
\beta_0&+&\beta_1&+&\beta_2&+&\beta_3&=&1\\
\beta_0&-&(1-p)^2\beta_1&-&(1-q)^2\beta_2&\ &\ &=&0\\
\ &\ &2p(1-p)\beta_1& -&2 q(1-q)\beta_2 &\ &\ &=&0\\
\ &\ &p^2 \beta_1&+&q^2 \beta_2& -&\beta_3&=&0\\
\ & \ &p,q\in [0,1], &\ &\beta_i  \geq 0.
\end{array}\right.
\end{split}
\end{equation}
Eliminating  $\beta_0$ and $\beta_3$ we reduce \eqref{CMP} to
\begin{equation}\label{CMP2}
\begin{split}
& Maximize \quad \beta_1 p(1-p)(1-2p)+\beta_2 q(1-q)(1+2q)\\
& \\
&Subject \ to\ \left \{ \begin{array}{rcrcr}
2[1-p(1-p)]\beta_1&+&2[1-q(1-q)]\beta_2&=&1\\
p(1-p)\beta_1& -& q(1-q)\beta_2 &=&0\\
p,q\in [0,1], \beta_i  \geq 0.
\end{array}\right.
\end{split}
\end{equation}
Further, eliminating $\beta_2$ from \eqref{CMP2} we derive
\begin{equation}\label{CMP3}
\begin{split}
& Maximize \quad 2\beta_1 p(1-p)(1-p+q)\\
& \\
&Subject \ to \quad
2\beta_1 p(1-p) \left[ \frac{1}{p(1-p)}+\frac{1}{q(1-q)}-2 \right]=1, \quad p,q,\beta_1 \in [0,1].
\end{split}
\end{equation}
Substituting $2\beta_1 p(1-p)$ and denoting $x=1-p$, $y=q$, we obtain the minimization problem

\begin{equation}\label{CMP4} Minimize \quad \frac{\displaystyle{\frac{1}{x(1-x)}+\frac{1}{y(1-y)}-2}}{x+y},\quad x,y \in [0,1].\end{equation}

\noindent Since $1/[x(1-x)]$ is a convex function, it is easy to see that if $x+y$ is kept constant, the minimum in \eqref{CMP4} is attained when $x=y$, which implies that $q=1-p$ (and subsequently $\beta_3=\beta_0$ and $\beta_2=\beta_1$) is a necessary condition for a Nash-equilibrium selection. Moreover, assuming $x=y$, we obtain that $x$ must minimize the function
\[ g(x)=\frac{1-x+x^2}{x^2(1-x)}=\frac{1}{x^2}+\frac{1}{1-x}.\]
Differentiating, we see that $g'(x)=0$ has only one solution in $[0,1]$, which satisfies $x^3=2(1-x)^2$, or $(1-p)^3=2p^2$, so $p=p_1$, where $p_1$ is given in \eqref{p1}. It now follows easily that $\beta_0=m_0$ and $\beta_1=m_1\,,$ where $m_0$ and $m_1$ are given in \eqref{weights}.

Therefore, we have proven that if $p = p_1$ and $q = 1-p_1$, with $0 < a_0 < p_1 < a_1 < q=1-p_1 < a_2 < 1$ as above, then $E(a_0,a_1, a_2;\mu)$ does not depend on $a_0, a_1, a_2$, or,
$$E(a_0,a_1, a_2;\mu) \equiv E(p_1) = \frac{2p_1(1-p_1)^2}{p_1^2+(1-p_1)^2+1}  \approx 0.1916\,.$$

\noindent Keeping in mind the results of Theorem \ref{thm:equimax}, it follows that the optimal strategy for Player II, is given by: $a_0= E(p_1) \approx .1916...$, $a_1=0.5, a_2=1-a_0 \approx .8084$.
The graph of $f(p)=D(a_0,a_1,a_2;p)$ is shown in Figure \ref{2tossplot}.

\begin{figure}[h]
\centering\includegraphics*[width=3.0in]{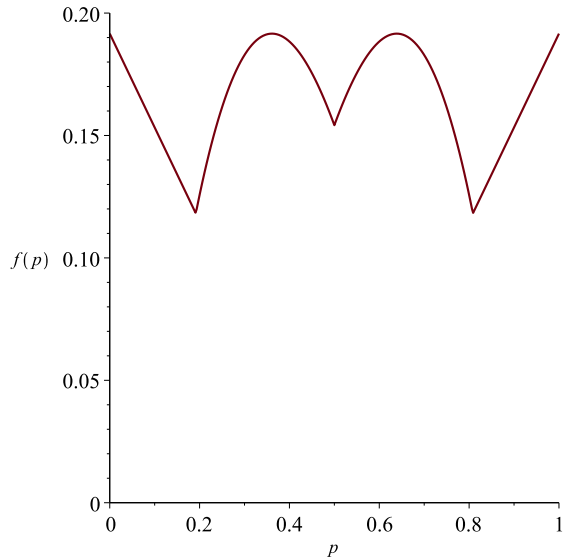}
\caption{Graph of $f(p)$ for $n=2$ tosses}
\label{2tossplot}
\end{figure}

\noindent Indeed, $f(0)=f(1)=a_0$, and direct but cumbersome calculations show that $f(p_1)=a_0$ and $f'(p_1)=0$, and by symmetry $f(1-p_1)=a_0$, $f'(1-p_1)=0$.
One has $f'(0)=-f'(1) \approx -0.38$ and $f''(p_1)=f''(1-p_1) \approx -4.43$. Since $f$ is a continuous piecewise-polynomial function whose restrictions to $[0,a_0]$ and to $[a_n,1]$ are straight lines, while the restrictions to $[a_0,a_1]$ and $[a_1,a_2]$ are concave functions, it follows that the set of the absolute maxima of $f$, $M(f)$,  cannot contain more than $4$ points, which are precisely $0, p_1, 1-p_1, 1$. Thus, $\|f\|_{\infty} = a_0 = E(p_1) = 0.1916...$.

\noindent Therefore, the Nash-equilibrium (\ref{Nash generalized}) is established. $\quad\Box $

\begin{remark}\label{rem:bilingual}
The proof of Theorem \ref{thm:2tosses}, as well as Example 4.5 (the one-toss case), show that the Nash strategy pair for Players II and I may be seen as the pair consisting of, the set of absolute error minimax estimators, and the least favorable prior distribution for which the former become Bayes estimators. However, unlike the case when the squared error loss function is used (see Remark 4.6), in the above proof of Theorem \ref{thm:2tosses}, as well as in the discussion of Example 4.5, it was shown that for the current absolute error loss function, given the least favorable distribution $\mu$ there is no unique corresponding Bayes estimator. Indeed, we have seen that for $n=1$ or $2$ tosses and for $\mu = \sum_{k=0}^{n+1}\,m_{i,n}\,p_{i,n}$ given in Example 4.5 and Theorem \ref{thm:2tosses}, respectively, every configuration $\{a_0,\ldots,a_n\}$ satisfying the interlacing property $0 = p_0 \leq a_0 \leq p_1 \leq \ldots \leq a_n \leq p_{n+1} = 1$ is a Bayes estimator for $\mu$. Does this hold for $n>2$? And, furthermore, in spite of the lack of uniqueness of the Bayes estimator, is our Minimax estimator unique? These issues will be taken up again in the next section.

\end{remark}

\setcounter{equation}{0}
\setcounter{theorem}{0}
\section{Conclusions and further remarks}
\label{Conclusions}

In this paper, minimax type techniques from Approximation Theory have been applied to a classical problem in Probability: estimating the probability of a biased coin after a few tosses. We used the Minimax Estimation with absolute error loss function to solve the problem, characterizing the optimal solution and studying the asymptotics of the optimal estimators as the number of tosses tend to infinity. In addition, the method employed has been described within the framework of a non-cooperative (in particular, zero-sum) two-player game, where both players are allowed to make use of mixed strategies which, in turn, is closely related to the connection between Minimax and Bayes estimators in Point Estimation Theory. Our main results are Theorem \ref{thm:equimax}, where the optimal strategy choice for the second player is characterized by means of a property with a striking resemblance to a well-known problem in Polynomial Interpolation, and Theorem \ref{LimDistr}, with its Corollary \ref{cor:weakconv}, where the uniform limiting distribution of the optimal choices is established. Likewise, the result of Theorem \ref{thm:2tosses}, where the Nash-equilibrium for the case of $n=2$ tosses is uniquely solved, is also remarkable.

In view of the results obtained, some further remarks and questions are noteworthy and will be subject of further research.

\begin{itemize}

\item We have shown that the sup-norm of the penalty function for the optimal choice is clearly smaller than the one corresponding to the Maximum Likelihood (MLE) choice (i.e. $a_k = k/n\,,\,k=0,\ldots,n\,,$ see Figure \ref{FiveToss}), especially for small values of $n$. But it is interesting to observe that for a slight modification of the MLE, namely taking $a_k = (k+1)/(n+2)\,,\,k=0,\ldots,n$, the results are much closer to those corresponding to the optimal choice (see Figure \ref{FiveTossmod}, where the initial Figure \ref{FiveToss} has been augmented by adding the graph of the penalty function for the modified MLE). Indeed, this modified MLE (MMLE) is much easier to compute than the optimal choice (especially for big values of $n$) and seems to provide near optimal results. In other words, if we replace the optimal selection by this MMLE, a {\em near Nash-equilibrium} arises (also commonly referred to as an $\varepsilon$-Nash equilibrium). This is a well-known problem in Game Theory: since the optimal strategies are often difficult to compute, it is customary to look for easily computable approximations for which the deviation from equality in the ``pure'' Nash-equilibrium \eqref{Nash generalized} is small enough. From a Game Theory standpoint, the difference between pure and near (or $\varepsilon$-) Nash-equilibria consists in the fact that while in the "pure" setting no player has motivation to modify its strategy (corresponding to the optimal strategy pair), in the near equilibrium setting there exists a small incentive to do it. Of course, our version of the near Nash-equilibrium using the MMLE only deals with the second player's strategy (for more information about near Nash-equilibrium, see e.g. \cite{nearNash}).

    This difference between ``pure'' and near Nash-equilibrium also has a counterpart regarding the optimality of the nodes in the sense of the Lebesgue constant in the context of polynomial interpolation. Indeed, if the interval $[-1,1]$ is considered, it is well known that the so-called Extended Chebyshev nodes (that is, the zeros of the Chebyshev polynomial $T_{n+1}$ adjusted in such a way that the first and last zero fall on the endpoints of the interval) provide a near optimal choice of interpolating nodes (see \cite{deBoorH}--\cite{Brutman}; see also \cite{Smith} for a deeper discussion on near optimal choices of nodes).

\begin{figure}[h]
\centering\includegraphics*[width=3.0in]{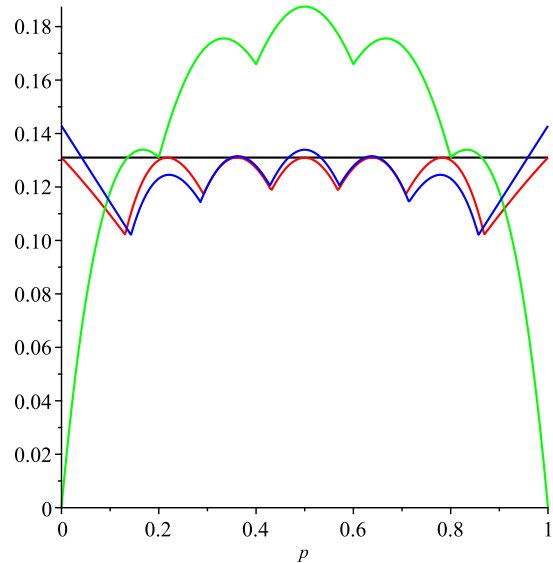}
\caption{Graph of $D(a_0,...,a_n;p)$ in the case of $n=5$ tosses: MLE  (green), MMLE (blue), optimal choice (AEME), (red), and the maximum value of the penalty function for the optimal choice (black) }
\label{FiveTossmod}
\end{figure}

\item However, Theorem \ref{LimDistr} shows that, as $n$ increases, the optimal choice, as well as the MLE and MMLE ones, or any other ``acceptable'' choice (such as, for instance, the SEME, given by (1.3)), all approach the same limiting distribution, in light of Remark \ref{rem:universality}. In other words, the advantage of using the optimal strategy over the MLE is worthwhile only for a small, or not very large, number of tosses.

\item In the solution of the Nash-equilibrium for the case of $n=2$ tosses we found that the interlacing property $0=p_0<a_0<p_1<a_1<p_2<a_2<p_3=1$ was satisfied. But what about the asymptotic distribution of the atomic measures $\displaystyle \sum_{j=0}^{n+1}\,m_{j,n}\,\delta_{p_{j,n}}\,$, with $\displaystyle \sum_{j=0}^{n+1}\,m_{j,n} = 1\,,$ which give the optimal strategy of the first player for each $n$ (provided they exist)? Of course, the same question may be posed using the language of Point Estimation Theory, regarding the pair formed by the Minimax estimator and its corresponding least favorable prior distribution.

\item As for the equimax property for the optimal choice of Player II (Minimax Estimation) given in Theorem \ref{thm:equimax}, it is necessary to point out a couple of pending questions about the uniqueness of the solution. First, is there a unique optimal configuration $\{a_0,\ldots,a_n\}$ for each $n$? And secondly, Theorem \ref{thm:equimax} proved that the intersection of the set of maxima $M(f)$ with each subinterval $(a_i,a_{i+1})$ corresponding to an optimal choice is nonempty. But, is there just a single absolute maximum on each subinterval? It was only established for the case of $n=2$ tosses and numerical results seem to confirm that it also holds for larger values of $n$.

\section*{Acknowledgement} The proof of Theorem \ref{LimDistr} is inspired from an argument presented by Vilmos Totik to the third author, for which the authors are very grateful. In addition, the authors wish to thank Prof. Jos\'{e} Luis Fern\'{a}ndez P\'{e}rez (Universidad Aut\'{o}noma, Madrid) for helping us to suitably place the present work within the
Point Estimation Theory literature.

\end{itemize}


\begin{thebibliography}{99}

\bibitem{BG} D. A. Blackwell, M. A. Girshick, Theory of games and statistical decisions, Dover Publications, 2012.

\bibitem{deBoorH} C. de Boor, {\it Polynomial Interpolation}, Proc. of the International Conference of Mathematicians, 1978, Acad. Sci. Fennica, Helsinki (1980), 917--922.

\bibitem{BP} C. de Boor, A. Pinkus, {\it Proof of the conjectures of Bernstein and Erd\H{o}s concerning the optimal nodes for polynomial interpolation}, J. Approx. Theory 24 (1978), no. 4, 289--303.

\bibitem{Brutman} L. Brutman, {\it On the Lebesgue function for polynomial interpolation}, SIAM J. Numer. Anal. 15 (1978), 699--704.

\bibitem{COP} O. Candogan, A. Ozdaglar, P. A. Parrillo, {\it Dynamics in near-potential games}, Games Econ. Behav. 82 (2013), 66--90.

\bibitem{nearNash} C. Daskalakis, P.W. Goldberg and C.H. Papadimitriou, {\it The Complexity of Computing a Nash Equilibrium}, SIAM Journal on Computing 39 (2009), 195�-259.

\bibitem{Fudenberg} D. Fudenberg, J. Tirole, Game Theory, MIT Press, Cambridge, MA, 1991.

\bibitem{Glicksberg} I. L. Glicksberg, {\it A further generalization of the Kakutani Fixed Point Theorem with application to Nash-equilibrium}, Proc. Amer. Math. Soc. 3 (1952), 170--174.

\bibitem{Holtz} O. Holtz, F. Nazarov, Y. Peres, {\it New coins from old, smoothly}, Constr. Approx. 2011: 33, 331--363.

\bibitem{KB} R. Kaas, J. M. Buhrman, {\it Mean, Median and Mode in Binomial Distributions}, Statistica Neerlandica, 34, no. 1, (1980), 13--18.

\bibitem{Ka} S. Kakutani, {\it A generalization of Brouwer's fixed point theorem}, Duke Math. J. vol. 7 (1941), 457--159.

\bibitem{K} T. A. Kilgore, {\it A characterization of the Lagrange interpolating projection with minimal Tchebycheff norm}, J. Approx. Theory 24 (1978), no. 4, 273--288.

\bibitem{LC} E. L. Lehmann, G. Casella, Theory of point estimation, Second edition, Springer Texts in Statistics, Springer-Verlag, New York, 1998.

\bibitem{McKe} R. D. McKelvey, T. R. Palfrey, {\it A statitistical theory of equilibrium games}, Japan. Econ. Rev. 47, No. 2, (1996), 186--209.

\bibitem{Nacu} S. Nacu, Y. Peres, {\it Fast simulation of new coins from old}, Ann. Appl. Prob. (2005), Vol. 15, 93--115.

\bibitem{N1} J. F. Nash, {\it Equilibrium points in $N$-person games}, Proc. Nat. Acad. Sci. U. S. A. 36, (1950), 48--49.

\bibitem{N2} J. F. Nash, {\it Non-cooperative games}, Annals of Math. 54, No. 2, (1951), 286--295.

\bibitem{Neumann} J. von Neumann, {\it Various techniques used in connection with random digits},  in Monte Carlo Method, Appl. Math. Series, vol. 12, U. S. Nat. Bureau of Standards, 1951, pp. 36-38 (Summary written by George E. Forsythe); reprinted in John von Neumann, Collected Works. Vol. 5, Pergamon Press; Macmillan, New York, 1963, pp. 768-770. MR 28.

\bibitem{Osborne} M. J. Osborne, A. Rubinstein, A Course in Game Theory, MIT Press, Cambridge, MA, 1994.

\bibitem{P} J. Pfanzagl, with the assistance of R. Hamb\H{o}ker, Parametric statistical theory, Walter de Gruyter, Berlin, 1994.

\bibitem{Smith} S. J. Smith, {\it Lebsgue constants in polynomial interpolation}, Ann. Math. Inform. 33 (2006), 109--123.

\end{thebibliography}
\end{document}